\documentclass{article}
\usepackage{times, amsmath, amsthm, amssymb, graphicx,mathrsfs}
\input xy
\xyoption{all}
\setlength{\parindent}{0pt}

\addtolength{\textwidth}{4cm}
\addtolength{\hoffset}{-2cm}
\addtolength{\textheight}{2cm}
\addtolength{\voffset}{-1cm}

\theoremstyle{definition}

\newtheorem{thm}{Theorem}[subsection]
\newtheorem{cor}[thm]{Corollary}
\newtheorem{lem}[thm]{Lemma}
\newtheorem{pro}[thm]{Proposition}

\newtheorem{df}[thm]{Definition}
\newtheorem{rem}[thm]{Remark}

\newcommand{\mmC}{\mathscr{C}}
\newcommand{\mG}{\mathscr{G}}

\newcommand{\mO}{\mathcal{O}}

\newcommand{\mN}{\mathcal{N}}
\newcommand{\mM}{\mathcal{M}}
\newcommand{\mI}{\mathcal{I}}

\newcommand{\mL}{\mathcal{L}}

\newcommand{\bD}{\mathbb{D}}

\DeclareSymbolFont{AMSb}{U}{msb}{m}{n}
\DeclareMathSymbol{\boldk}{\mathord} {AMSb}{"7C}

\begin{document}
\title{\addtocounter{footnote}{1} The V-filtration for tame unit $F$-crystals}
\author{Theodore J. Stadnik, Jr. \footnote{Author partially supported by NSF grant DMS-0636646 \ \hspace*{800pt} MSC2010  \hspace*{13pt} MSC2010 Codes 16D10, 14B05 } \\
	Northwestern University \\}
\date{}
\maketitle

\begin{abstract}
    \centering
    \begin{minipage}{0.6\textwidth}
    Let $X$ be a smooth variety over an algebraically closed field of characteristic $p > 0$, $Z$ a smooth divisor, and $j: U=X \setminus Z \rightarrow X$ the natural inclusion.  An 
axiomatization of the properties of a $V$-filtration on a unit $F$-crystal is proposed and is proven to determine a unique filtration.  It is shown that if $\mM$ is a tame unit $F$-crystal 
on $U$ then such 
a $V$-filtration along $Z$ exists on $j_*\mM$.  The degree zero component of the associated graded module is proven to be the (unipotent) nearby cycles functor of Grothendieck and Deligne under the Emerton-Kisin Riemann-Hilbert correspondence.  A few applications to $\mathbb{A}^1$ and gluing are then discussed.
    \end{minipage}
\end{abstract}

\tableofcontents

\section{Introduction}

\hspace{10pt} An important construction for $\bD$-modules in characteristic zero is the $V$-filtration. While this filtration has many interesting applications, this paper mostly focuses on only a single aspect.  The $V$-filtration gives a purely algebraic method for discussing the nearby and vanishing cycles functors for $\bD$-modules without first passing through the Riemann-Hilbert correspondence.  This paper will prove the existence of a unique $V$-filtration in the case when $\mM$ is a tame unit $F$-crystal, a critical first step to a more intricate theory.  As in the characteristic zero setting, this $V$-filtration can be used to define a (unipotent) nearby cycles functor on the category of tame $F$-crystals.  This functor is compatible under the Emerton-Kisin Riemann-Hilbert correspondence to Grothendieck and Deligne's (unipotent) tame nearby cycles functor.  As an application, the unipotent nearby cycles functor will be combined with a naive vanishing cycles functor to give a gluing theorem for tame unit $F$-modules on $\mathbb{A}^1$.\\

\hspace{10pt} The $V$-filtration in characteristic zero was first studied by Kashiwara and Malgrange (\cite{Kas} and \cite{Mal}). Fixing a normal crossings divisor $f:X \rightarrow \mathbb{A}^1_{\mathbb{C}}$ inside of a smooth complex variety $X$, it was shown that there is a unique $\mathbb{C}$-indexed filtration on holonomic $\bD$-modules satisfying formal properties.  The four most important formal properties are related to coherence, multiplication by $f$ being an operator of degree one, $\partial_f$ being an operator of degree minus one, and the automorphism $f\partial_f-r$ acting on the $r^{th}$ component of the associated graded being nilpotent.  A consequence of the formal properties is that the degree zero component of the associated graded corresponds under the Riemann-Hilbert correspondence to the unipotent nearby cycles functor.  Taking different components of the associated graded, one can recover the nearby and vanishing cycles functors.  Moreover, on the graded objects the morphism ``multiplication by $f$'' will correspond to the variation map, $\partial_f$ the canonical map, and $f\partial_f$ the monodromy action.  \\

\hspace{10pt} Unfortunately, in positive characteristic the techniques from the classical setting do not translate well.  Most notably, there is not a single Euler operator but infinitely many Euler operators $f^{p^n}\partial_f^{[p^n]}$ which all have integral eigenvalues.  Additionally, the ring $\bD$ is not Noetherian, which impedes proving any statements which depend on finite generation.  The most general definition for nearby and vanishing cycles is given in \cite{SGA7II}, where it is defined on the derived category of constructible sheaves with coefficients in $\Gamma$.  Even though the definition does not require the residue characteristic of the base field be invertible in $\Gamma$, for many theorems this hypothesis is critical.  This was an impedance to defining the notion of a perverse sheaf for $p$ not invertible in $\Gamma$.  This impedance was removed independently by Gabber and Emerton-Kisin (\cite{Ga} and \cite{EK}).  Gabber showed that the category of constructible sheaves admits an exotic $t$-structure whose heart is known as the category of perverse sheaves.  Emerton and Kisin created a Riemann-Hilbert correspondence between the derived category of unit $F$-modules and the category of constructible $\mathbb{F}_p$-sheaves which translates the trivial $t$-structure to an exotic $t$-structure on the derived category of constructible sheaves.  It turns out the abelian categories obtained by these two methods are the same.  Therefore, it is natural to ask whether there is a purely algebraic notion of $V$-filtration for these unit $F$-modules, which are a very special type of left $\bD$-module.  The main results of this paper are\\

\textbf{Theorems \ref{mainthm} and \ref{compthm}} 
\begin{enumerate}
\item If $\mM$ is a tame $F$-crystal on $X \setminus Z$ where $Z$ is a smooth divisor, then $j_*\mM$ admits a unique V-filtration along $Z$. This filtration has the property that $Gr^0_V\mM$ is a unit $F$-crystal on $Z$.
\item Let $Sol(-)$ denote the Riemann-Hilbert functor taking unit $F$-modules to perverse sheaves on the \'etale site, $f$ a smooth divisor, and $Z$ its support. Let $\Phi_f^{un,tame}(-)$ be the tame unipotent nearby cycles functor of Grothendieck-Deligne and $\Psi_Z^{un}(\mM) = Gr_V^0 \mM$.  There is a natural isomorphism
$$\Psi^{un,tame}_f(Sol_U(-)) \simeq Sol_Z(\Psi_Z^{un}(-))$$
of sheaves on $Z_{\acute{e}t}$.
\end{enumerate}

\hspace{10pt} The nearby and vanishing cycles functors were used by Deligne in both the complex and $l \neq p$ cases to understand perverse sheaves stratified along divisors.  This approach was modified by Verdier \cite{Ve} and used to glue perverse sheaves along a divisor.  An alternative approach with similar results was provided by Beilinson \cite{Be}, whose approach reduced some of the redundancy of the gluing data by using the unipotent nearby cycles functor.  The most general approach to gluing was completed by MacPherson-Vilonen \cite{McV}.  The main criteria for $V$-filtration in the context of this paper was to recreate important instances of the gluing phenomenon.  The easiest case of gluing in characteristic zero is using the nearby cycles functor to recover the equivalence of categories between local systems on $X \setminus Z$ and representations of $\pi_1(X \setminus Z)$.  A positive characteristic version of this theorem is proven in section $5$ using the $V$-filtration developed in section $3$.  \\

\hspace{10pt} Even though a notion of Bernstein-Sato polynomial has been discovered by Musta{\c{t}}{\u{a}} in \cite{Mu}, the approach taken in this paper is an approach by flat descent.  In section two, formal properties of a $V$-filtration are explored and a descent argument is used to reduce existence to an \'etale local question.  After the formal properties are developed, the definition of tame $F$-crystals is given.  The work of Grothendieck-Murre is employed with the hypothesis of tame ramification to further reduce the question of existence to the case of a Kummer covering, which is explored in detail by \ref{starterthm}.  After discussing existence in section three, it is proven in section four that the filtration discovered recovers the (unipotent) nearby cycles functor.  As a final application in section five, there is a discussion of gluing split unit $F$-crystals on $\mathbb{A}^1$ and placing the $V$-filtration on extensions. \\

\textbf{Generalizations.} For expository purposes, this paper concentrates only on unit $F$-modules tamely ramified along smooth divisors.  There are obvious generalizations to the proofs and definitions for unit $F^n$-modules tamely ramified along normal crossings divisors.\\
 
\textbf{Acknowledgements.}  The author benefited from conversations with 
Sam Gunningham and was greatly assisted by notes from Kari Vilonen about 
the $V$-filtration in characteristic zero.  The author would like to thank 
David Nadler for suggesting to pursue the $V$-filtration in positive 
characteristic in the crystalline setting and his patience and assistance 
while the author found the correct context for it.  Lastly, he is very
grateful to Matthew Emerton for his guidance through the unit $F$-module Riemann-Hilbert correspondence, the suggestion to consider tame ramification, and supplying the author with very helpful insights on conducting research in positive characteristic geometry.\\

\section{Preliminary Information}
\hspace{10pt}In the first subsection of this paper, two fundamental theorems from algebraic geometry are recounted and strengthened to be specifically worded for the stronger hypotheses applied later in this paper.  In the definition of the objects, important facts from Grothendieck's theory of tame ramification will be used.  A subsection is provided outlining the main theorems used from the text by Grothendieck and Murre \cite{GM}.  The last subsection features a restricted case of the Emerton-Kisin Riemann-Hilbert correspondence due to Katz \cite{Ka}.  This version of the theorem will be used to show compatibility of the definitions from section $4$ under this correspondence.\\

Throughout this paper $\boldk$ will denote an algebraically closed field.

\subsection{Theorems from algebraic geometry}

\begin{thm}\label{grothzmt}\cite[8.12]{EGAIV}(Zariski's Main Theorem) Let $X$ be a normal locally Noetherian integral scheme and $Z \subset X$ a subscheme of codimension one.  If $U = X \setminus Z$ and $\pi_V:V \rightarrow U$ with $K(V) \supset K(U)$ a finite separable extension, then there exists a normal integral scheme $Y$ with maps $\pi: Y \rightarrow X$ and $\iota: V \rightarrow Y$ such that:\\
\begin{enumerate}
\item The following diagram is a commutative.
$$\xymatrix{V \ar@{->}[r]^{\iota} \ar@{->}[d]^{\pi_V}		& Y \ar@{->}[d]^{\pi} \\
            U \ar@{->}[r]	 & X  \\}$$
\item $\pi$ is finite and $\iota$ is an open immersion.
\item If $y \in \pi^{-1}(Z)$ and of codimension one, then $\pi(y)$ is also of codimension one.
\item If $X$ is finite type over $\boldk$, then so is $Y$.
\end{enumerate}
\end{thm}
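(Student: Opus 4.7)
The plan is to construct $Y$ as the normalization of $X$ in the function field $K(V)$ and then verify the four listed properties in sequence.

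First I would build $Y$ affine-locally: on $\operatorname{Spec} A \subset X$ with $A$ a normal Noetherian domain, set $Y|_{\operatorname{Spec} A} = \operatorname{Spec} B$, where $B$ is the integral closure of $A$ in $K(V)$. Normality and integrality of $Y$ are immediate by construction, and these glue. The crucial step is finiteness of $\pi:Y \to X$, which reduces to the classical fact that the integral closure $B$ of a normal Noetherian domain $A$ in a finite separable extension $L/\mathrm{Frac}(A)$ is a finitely generated $A$-module. The standard proof is to choose a basis of $L/\mathrm{Frac}(A)$, scale it to lie in $B$, and use non-degeneracy of the trace form $L \otimes_{\mathrm{Frac}(A)} L \to \mathrm{Frac}(A)$ to sandwich $B$ between two free $A$-submodules of $L$ of the same rank; Noetherianness of $A$ then gives finite generation. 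Gluing these local statements produces the finite morphism $\pi$ globally.

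Next I would construct $\iota:V \to Y$. Taking $V$ normal (otherwise replacing it by its normalization, which has the same function field), the universal property of normalization provides a canonical morphism from $V$ into the normalization of $X$ in $K(V)$, namely $Y$. Because the restriction $\pi^{-1}(U) \to U$ is itself the normalization of $U$ in $K(V)$ and $V$ already factors through $U$, one identifies $V$ with $\pi^{-1}(U)$, which is open in $Y$. This gives the commutative diagram in (1) and the open-immersion half of (2).

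For (3), since $\pi$ is finite (hence integral) between normal integral schemes, both going-up and going-down hold, so a chain of specializations of length one in $Y$ lifts a chain of length one in $X$; equivalently, the finite integral extension of local rings $\mO_{X,\pi(y)} \to \mO_{Y,y}$ preserves Krull dimension, and a codimension-one point $y$ lies over a codimension-one point of $X$. For (4), $Y \to X$ finite composed with $X \to \operatorname{Spec}\boldk$ finite type is again finite type. The main obstacle is the finiteness of normalization; it rests essentially on normality plus Noetherianness of $X$ together with separability of $K(V)/K(U)$ (the trace-form argument fails without separability, and a Nagata-type hypothesis would otherwise be needed). All other claims are standard bookkeeping once $\pi$ is finite.
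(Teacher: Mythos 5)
Your proposal follows essentially the same route as the paper's (sketched) proof: take $Y$ to be the normalization of $X$ in $K(V)$, get finiteness of $\pi$ from Noetherianness of $X$ plus separability of $K(V)/K(U)$ (the trace-form argument), identify $V$ with $\pi^{-1}(U)$ to get the open immersion, deduce (3) from going-down for integral extensions over the normal base together with incomparability, and (4) from finite over finite type. The only shared implicit point is that identifying $V$ with $\pi^{-1}(U)$ uses that $V$ is normal and finite (or at least quasi-finite, via the intended application to \'etale covers) over $U$, which the paper also leaves tacit, so your argument is fine.
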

\begin{proof}  Most of this statement is contained in the reference, therefore only a sketch is provided.
\begin{enumerate}
 \item $Y$ is constructed as the normalization of $X$ in $K(V)$ and the diagram follows from this construction.
 \item These statements are local so assume $X$ is affine.  Since $K(X)=K(U) \subset K(V)$ is finite separable and $X$ is Noetherian, $\mO_Y=\overline{\mO_X}^{K(V)}$ is a finitely generated module over $\mO_X$.
 \item If $y \in \pi^{-1}(Z)$ then, since $X$ is normal and any integral extension of a normal domain satisfies the going down axiom, there is some $y'$ with $y \in \overline{\{y'\}}$ such that $\pi(y')$ is codimension one.  However, $y$ has codimension one implies $y' = y$ since $Y$ is integral over $X$.
 \item Clear from 2.
\end{enumerate}
\end{proof}

\begin{thm}\cite{AK}\label{znthm}(Zariski-Nagata Purity Theorem) Let $X$ be a smooth $\boldk$-scheme and $\pi: Y \rightarrow X$ be a quasi-finite morphism which is generically \'etale.  If $\pi$ is \'{e}tale at all codimension one points of $Y$, then $\pi$ is \'{e}tale.
\end{thm}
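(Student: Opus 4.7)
The plan is to localize the problem, use Zariski's Main Theorem to convert the quasi-finite morphism into a finite one over a regular local base, and then combine Hironaka's miracle-flatness criterion with the discriminant characterization of the branch locus.

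Since étaleness is an open condition on the source, it suffices to verify that $\pi$ is étale at every closed point $y \in Y$. Localize $X$ at $\pi(y)$ to assume $X = \mathrm{Spec}(A)$ with $A$ regular local. The general form of Zariski's Main Theorem (of which Theorem \ref{grothzmt} is a particular case) factors the quasi-finite separated morphism $\pi$ as an open immersion $Y \hookrightarrow \bar Y$ followed by a finite morphism $\bar\pi: \bar Y \to X$. Since étaleness at $y$ is tested locally and $Y$ is open in $\bar Y$, I may replace $\pi$ by $\bar\pi$ and assume $\pi: \mathrm{Spec}(B) \to \mathrm{Spec}(A)$ is finite with $B$ a finite $A$-algebra.

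The next step is to show $\pi$ is flat, which by Hironaka's miracle-flatness criterion reduces to showing $B$ is Cohen-Macaulay. Étaleness in codimension one says $B$ is regular (hence CM) at every height-one prime, i.e., $B$ is $(R_1)$, and generic étaleness says $B$ is generically reduced with no embedded associated primes in codimension zero. Upgrading $(R_1)$ to the full $(S_n)$ depth condition proceeds by induction on $\dim A$, cutting $A$ by a $B$-regular parameter $t$ and passing to $A/tA$ and $B/tB$, while checking that the étaleness-in-codimension-one and generic-étaleness hypotheses are inherited after specialization. This is the main technical obstacle, because specialization does not obviously preserve étaleness in the appropriate codimension after cutting, and one must argue that the $(R_1)$ locus persists under the quotient.

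Finally, with $B$ now a free $A$-module, the trace pairing defines a discriminant ideal $\mf{d}_{B/A} \subset A$ whose vanishing locus is precisely the ramification locus of $\pi$. Because $A$ is a regular local ring and hence a UFD, $\mf{d}_{B/A}$ is principal; by the codimension-one étaleness hypothesis, $\mf{d}_{B/A}$ is contained in no height-one prime and is therefore the unit ideal. Hence $\pi$ is unramified, and combined with flatness, étale at $y$. As $y$ was arbitrary, $\pi$ is étale.
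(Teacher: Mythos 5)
The paper does not prove Theorem \ref{znthm} at all --- it is quoted from \cite{AK} --- so your proposal has to stand on its own, and it does not: the gap is exactly where you flag ``the main technical obstacle.'' Showing that $B$ is Cohen--Macaulay is not a technical detail to be finessed by cutting with a parameter and inducting; it is the entire content of the purity theorem. The hypotheses you actually use ($(R_1)$, generic \'etaleness, \'etaleness in codimension one on $Y$) simply do not imply Cohen--Macaulayness, and indeed do not imply the theorem: take $X$ a smooth affine surface, $p \in X$ a closed point, and let $Y$ be $X \sqcup X$ with the two copies of $p$ glued, i.e.\ $\mathrm{Spec}$ of $B=\{(f,g)\in \mathcal{O}(X)\times\mathcal{O}(X): f(p)=g(p)\}$. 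Then $Y\to X$ is finite, generically \'etale, and \'etale at every codimension-one point of $Y$ (the glued point has codimension two), yet the local ring at the glued point has depth $1$, so $\pi$ is not flat, hence not \'etale, there. Thus the statement is false without the normality of $Y$ that the cited form of the theorem (and every application in the paper, where $Y$ arises as a normalization via \ref{grothzmt}) carries; since your argument never invokes normality, it cannot be correct as written. Even after adding normality (which gives $S_2$), a normal ring of dimension $\geq 3$ need not be Cohen--Macaulay, the proposed specialization $A\to A/tA$, $B\to B/tB$ destroys both $(R_1)$ and \'etaleness in codimension one, and no elementary cutting induction is known to produce the required depth: establishing flatness here is essentially equivalent to the theorem, which is why the standard proofs (Nagata's, or SGA2 Exp.\ X via local Lefschetz conditions and formal geometry) are of a genuinely different nature. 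Your miracle-flatness-plus-discriminant route is a complete proof only when flatness is known in advance or when $\dim X \leq 2$, where normal implies Cohen--Macaulay.

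There is a second, smaller gap in the reduction. After replacing $Y$ by the finite $\bar Y$ furnished by Zariski's Main Theorem, your discriminant argument needs \'etaleness at the codimension-one points of $\bar Y$, not of $Y$: the discriminant ideal $\mathfrak{d}_{B/A}$ lies in a height-one prime $\mathfrak{p}\subset A$ as soon as \emph{some} point of $\bar Y$ over $\mathfrak{p}$ is ramified, and that point may lie in $\bar Y\setminus Y$, about which the hypothesis says nothing. To argue locally on $Y$ you would have to replace the discriminant by the different or the Fitting ideal of $\Omega_{Y/X}$ and prove that its vanishing locus is pure of codimension one in $Y$, or else arrange (as the paper's applications do, since there the cover is already finite and normal) that the codimension-one hypothesis holds on all of $\bar Y$.
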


\subsection{Tame ramification along smooth divisors}

In this section, $X$ is a locally Noetherian normal scheme.

\begin{df} Recall that an extension of discrete valuations rings $(A,\pi) \rightarrow (B,\Pi)$ is called tamely ramified if
 \begin{enumerate}
  \item $\pi B = \Pi^e$ and $e$ is invertible in $A/(\pi)$.
  \item The extension $A/(\pi) \subset B/(\Pi)$ is separable.
 \end{enumerate}
\end{df}

\begin{df}\label{tamedf}\cite[2.2.2]{GM} Let $Z \subset X$ be a closed subscheme of codimension one.  A normal covering $\pi : Y \rightarrow X$ is said to be tamely ramified with respect to $Z$ if
\begin{enumerate}
 \item $\pi$ is finite.
 \item $\pi$ is \'{e}tale over $U=X \setminus Z$.
 \item For all generic points $z \in Z$, the extension $\mO_{X,z} \subset \mO_{Y,y}$ is tamely ramified as discrete valuation rings for all $y \in \pi^{-1}(z)$.
\end{enumerate}
\end{df}

\begin{lem}\cite[2.2.8]{GM} \label{tamelem} If $Z \subset X$ is a closed subscheme of codimension one and $\pi: Y \rightarrow X$  is a finite map which is \'{e}tale over $X \setminus Z$, then the following are equivalent.
 \begin{enumerate}
  \item $\pi$ is tamely ramified with respect to $Z$.
  \item $\pi_{z} : Y \times_X Spec(\mO_{X,z}) \rightarrow Spec(\mO_{X,z})$ is tamely ramified with respect to $Z \times_X Spec(\mO_{X,z})$ for all $z \in Z$.
  \item $\pi_{\overline{z}} : Y \times_X Spec(\mO^{\acute{e}t}_{X,\overline{z}}) \rightarrow Spec(\mO^{\acute{e}t}_{X,\overline{z}})$ is tamely ramified with respect to $Z \times_X Spec(\mO^{\acute{e}t}_{X,\overline{z}})$ for all geometric points $\overline{z} \in Z$.
 \end{enumerate}
\end{lem}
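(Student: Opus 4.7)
The plan is to reduce all three conditions to statements about discrete valuation ring extensions $\mO_{X,z} \subset \mO_{Y,y}$ at codimension-one points of $X$, and then to verify the implications by base-change arguments. Since $X$ is locally Noetherian and normal, for every generic point $z$ of an irreducible component of $Z$ the local ring $\mO_{X,z}$ is a DVR whose uniformizer cuts out $Z$ locally. The hypotheses that $\pi$ is finite and \'etale over $X \setminus Z$ are preserved under flat base change, so after base change to $Spec(\mO_{X,z})$ or $Spec(\mO^{\acute{e}t}_{X,\overline{z}})$ we continue to have finite maps that are \'etale over the complement of the pullback of $Z$.

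For (1) $\Longleftrightarrow$ (2): the direction (2) $\Longrightarrow$ (1) is immediate by taking $z$ to be a generic point of $Z$, for then $Z \times_X Spec(\mO_{X,z}) = \{z\}$ and the closed points of $Y \times_X Spec(\mO_{X,z})$ are exactly the points of $\pi^{-1}(z)$; tameness of the base change at $z$ becomes tameness of each DVR extension $\mO_{X,z} \subset \mO_{Y,y}$ appearing in Definition \ref{tamedf}(3). For (1) $\Longrightarrow$ (2), fix any $z \in Z$. The generic points of $Z \times_X Spec(\mO_{X,z})$ are precisely the generic points of the irreducible components of $Z$ whose closure in $X$ contains $z$, and at each such generic point $z'$ the DVR extensions that appear in the base change coincide with extensions $\mO_{X,z'} \subset \mO_{Y,y}$ already controlled by (1).

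For (2) $\Longleftrightarrow$ (3): the key fact is that the strict henselization of a DVR is again a DVR with the same uniformizer, and hence the structural extension $\mO_{X,z'} \to \mO^{sh}_{X,z'}$ is unramified with algebraically closed residue field. For a generic point $z'$ of $Z$ mapping into the image of $\overline{z}$, the tensor product $\mO_{Y,y} \otimes_{\mO_{X,z'}} \mO^{sh}_{X,z'}$ decomposes as a finite product of strict henselizations of the $\mO_{Y,y'}$ at geometric points $\overline{y'}$ above $\overline{z}$. Ramification index $e$ and separability of the residue field extension are preserved factor by factor by strict henselization, and invertibility of $e$ on the residue field is preserved since $\mO^{\acute{e}t}_{X,\overline{z}}$ has residue field of the same characteristic. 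The converse implication from (3) to (2) uses faithful flatness of strict henselization to descend the ramification data.

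The main obstacle in the argument is the careful bookkeeping involved in the decomposition of $\mO_{Y,y} \otimes_{\mO_{X,z}} \mO^{\acute{e}t}_{X,\overline{z}}$ into its semilocal components and matching generic points of the pullback of $Z$ in an \'etale neighborhood with generic points of $Z$ in $X$. Once this correspondence is made explicit, all three conditions reduce to the same DVR tameness statement at each pair $(z',y')$, and the equivalence of (1), (2), and (3) follows.
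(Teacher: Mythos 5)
The paper offers no proof of this lemma at all: it is quoted directly from Grothendieck--Murre \cite[2.2.8]{GM}, so there is nothing internal to compare against. Your overall route --- reduce all three statements to tameness of the DVR extensions $\mO_{X,z'} \subset \mO_{Y,y}$ at the generic points $z'$ of $Z$ (respectively of its preimages), and check that these conditions are insensitive to localization and to strict henselization --- is exactly the standard argument underlying the cited result, and your treatment of $(1) \Leftrightarrow (2)$ is fine: for $(2) \Rightarrow (1)$ one only needs $z$ a generic point of $Z$, and for $(1) \Rightarrow (2)$ the generic points of $Z \times_X Spec(\mO_{X,z})$ are the generalizations $z'$ of $z$ in $Z$, whose local data is already controlled by $(1)$.

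Two points in your $(2) \Leftrightarrow (3)$ step need repair. First, the residue field of $\mO^{\acute{e}t}_{X,\overline{z}}$ is \emph{separably} closed, not algebraically closed. This is not cosmetic: tameness includes separability of the residue extension, and it is precisely because the strict henselization only separably closes the residue field that an inseparable extension $k(z') \subset k(y)$ survives the base change (e.g.\ $k(z')=\mathbb{F}_p(t)$, $k(y)=\mathbb{F}_p(t^{1/p})$ yields a purely inseparable degree-$p$ extension of the separable closures); if the residue field were algebraically closed, the residue condition would trivialize upstairs and your $(3) \Rightarrow (2)$ direction would lose it. Second, when $\overline{z}$ is not a generic point of $Z$ (the typical case, e.g.\ a closed point), the rings one must control in $(3)$ are the localizations $(\mO^{\acute{e}t}_{X,\overline{z}})_{z''}$ at the height-one primes $z''$ over $z'$, together with the corresponding localizations upstairs; these are \emph{not} strict henselizations of $\mO_{X,z'}$ or of $\mO_{Y,y}$, so the asserted decomposition of $\mO_{Y,y}\otimes_{\mO_{X,z'}}\mO^{\acute{e}t}_{X,z'}$ into strict henselizations is not literally the situation at hand. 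What actually closes the argument is that $\mO_{X,z'} \rightarrow (\mO^{\acute{e}t}_{X,\overline{z}})_{z''}$ is a filtered colimit of \'etale algebras, hence a faithfully flat extension of DVRs with ramification index one and separable residue extension, and such base changes both preserve and reflect the ramification index and the residue separability of the finite DVR extensions upstairs. This identification is exactly the ``bookkeeping'' you defer, so as written that half of the equivalence is an outline rather than a proof; spelling out the colimit-of-\'etale-neighborhoods argument (or simply quoting the corresponding statements in \cite{GM}) would complete it.
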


\begin{rem}\cite[2.4.2-2.4.4]{GM} Let $X$ be connected and $Z \subset X$ a normal crossings divisor.  There is a natural way to generalize the construction of $\pi^{\acute{e}t}$ by considering normal covers which are tamely ramified with respect to $Z$.  In this way one constructs a profinite group, $\pi^{t}_1(X,Z)$, whose continuous actions on finite sets correspond to finite tamely ramified covers.  The group $\pi_1^{t}(X,Z)$ is called the tame fundamental group of $X$ with respect to $Z$.
\end{rem}

\begin{thm}\label{abhlem}\cite[XIII: Appendix 1]{SGA1}(Consequence of Abhyankar's lemma)
If $S$ is a scheme which is the spectrum of a strictly Henselian regular local ring and $D \subset S$ is a divisor with normal crossings then $\pi_1^{t}(S,D)$ is abelian.
\end{thm}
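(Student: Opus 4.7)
The plan is to apply Abhyankar's lemma to reduce every finite tame Galois cover of $S$ to the pullback of a standard Kummer cover, and then to use the strict Henselianity of $S$ to split that Kummer cover after pullback. Writing $S = \operatorname{Spec}(A)$ with $A$ a strictly Henselian regular local ring, I would pick a regular system of parameters $(f_1,\ldots,f_d)$ in which $D = V(f_1 \cdots f_r)$, and for each integer $n$ prime to the residue characteristic $p$ form the standard Kummer cover
\[ S_n := \operatorname{Spec}\bigl( A[t_1,\ldots,t_r]/(t_1^n - f_1, \ldots, t_r^n - f_r) \bigr) \longrightarrow S. \]
This cover is finite, tamely ramified along $D$ in the sense of Definition~\ref{tamedf}, and Galois with group $\mu_n(\boldk)^r \cong (\mathbb{Z}/n\mathbb{Z})^r$, which is abelian.

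The next observation I would record is that $S_n$ is itself the spectrum of a strictly Henselian regular local ring: the ring $A_n := A[t_1,\ldots,t_r]/(t_i^n - f_i)$ is finite over $A$, hence local and Henselian; its residue field coincides with that of $A$ and is therefore separably closed; and $(t_1,\ldots,t_r, f_{r+1},\ldots,f_d)$ is a regular system of parameters on $A_n$. In particular every finite étale cover of $S_n$ is a trivial disjoint union of copies of $S_n$. Given any connected finite tame Galois cover $Y \to S$ with ramification contained in $D$, Abhyankar's lemma (cited as \cite[XIII, Appendix~1]{SGA1}) supplies an $n$ coprime to $p$ such that $Y \times_S S_n \to S_n$ is étale; by the previous sentence this pullback is trivial, so $S_n \to S$ dominates $Y \to S$. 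This exhibits $\operatorname{Gal}(Y/S)$ as a quotient of $\operatorname{Gal}(S_n/S) = (\mathbb{Z}/n\mathbb{Z})^r$, which is abelian. Since the finite quotients of $\pi_1^{t}(S,D)$ are exactly such Galois groups, the profinite group $\pi_1^{t}(S,D)$ is an inverse limit of abelian groups and is therefore abelian.

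The main obstacle is the invocation of Abhyankar's lemma itself: producing the dominating Kummer cover is the genuine geometric content and is being used as a black box here. Once that input is accepted, the remainder of the argument is essentially formal, since the strict Henselianity of $S_n$ immediately kills the residual étale cover and what remains to describe the tame fundamental group is the abelian Kummer data on the $r$ local equations of $D$.
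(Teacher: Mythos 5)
The paper offers no proof of this statement at all---it is quoted directly from SGA1---so your argument can only be compared with the standard one underlying that citation, which it reproduces correctly: Kummer covers $S_n$ are cofinal among tame covers by Abhyankar's lemma plus strict Henselianity, and their Galois groups $(\mathbb{Z}/n\mathbb{Z})^r$ are abelian, so the pro-finite limit is abelian. The only imprecision is that Abhyankar's lemma gives \'etaleness over $S_n$ of the \emph{normalization} of $Y \times_S S_n$ (the fibre product itself need not be normal), but since $S_n$ is strictly Henselian that normalization is still a trivial disjoint union of copies of $S_n$, so the domination of $Y$ by $S_n$ and the quotient argument go through unchanged.
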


\begin{df} Let $S$ be a scheme, $\{a_i\}_{i \in I}$ a finite regular sequence of sections, and $\{n_i\}_{i \in I}$ a set of integers coprime to the residue characteristic of $S$.  Define a Kummer covering to be any cover isomorphic (as $S$-schemes) to the cover $$\mO_S[\{T_i\}]/(\{T_i^{n_i}-a_i\}).$$
\end{df}

The next corollary uses \ref{tamelem} to conclude that if $Z$ is the disjoint union of smooth divisors, then \'etale locally one may replace a tamely ramified cover with a Kummer covering.  The latter objects are clearly easier to understand.

\begin{cor}\cite[2.3.4]{GM}\label{abhlem2} If $\pi: Y \rightarrow X$ is tamely ramified with respect to a divisor $Z$ which is the disjoint union of smooth divisors, then for every point $x \in X$ there is an \'etale neighborhood of $x$, $U \rightarrow X$, such that $Y_U \rightarrow X_U$ is a finite disjoint union of Kummer coverings.
\end{cor}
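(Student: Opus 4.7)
The plan is to reduce to a strictly Henselian situation where Abhyankar's lemma (Theorem \ref{abhlem}) determines the structure completely, and then descend back. First I would use Lemma \ref{tamelem}, which says tame ramification can be tested on the étale stalks. Fix a geometric point $\bar{x}$ above $x$ and set $S = \operatorname{Spec}(\mO^{\acute{e}t}_{X,\bar{x}})$. Because $Z$ is assumed to be the disjoint union of smooth divisors, the pullback $Z_S = Z \times_X S$ is either empty or consists of a single smooth component (distinct components of $Z$ cannot share a point). If $Z_S$ is empty then $\pi_S : Y_S \to S$ is finite étale over a strictly Henselian local scheme and therefore is a disjoint union of copies of $S$, which is trivially a disjoint union of Kummer covers (taking $n_i = 1$). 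So assume $Z_S$ is cut out by a regular element $a \in \mO_S$.

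Next I would invoke Theorem \ref{abhlem} to see that $\pi_1^t(S, Z_S)$ is abelian. In fact one knows more: its continuous finite quotients of order prime to $p$ are generated by the classes of Kummer covers $\operatorname{Spec}\bigl(\mO_S[T]/(T^n - a)\bigr)$ for integers $n$ coprime to the residue characteristic. Granting this identification, the finite tame cover $Y_S \to S$ corresponds to a continuous action of $\pi_1^t(S, Z_S)$ on a finite set. Decomposing into orbits gives the decomposition of $Y_S$ into connected components. Each orbit is of the form $\pi_1^t(S, Z_S)/H$ for an open subgroup $H$, and by the above description the corresponding connected component of $Y_S$ is isomorphic, as an $S$-scheme, to the Kummer cover $\operatorname{Spec}\bigl(\mO_S[T]/(T^{n} - a)\bigr)$ for the appropriate $n$ prime to the residue characteristic. (This is essentially the content of \cite[2.3.4]{GM} and is where the tameness hypothesis is used.)

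Finally, I would spread this isomorphism out: writing $S$ as the cofiltered limit of its affine étale neighborhoods $U_\alpha \to X$, both $Y_{U_\alpha}$ and the candidate Kummer cover $\operatorname{Spec}\bigl(\mO_{U_\alpha}[T_i]/(T_i^{n_i} - a_i)\bigr)$ are finitely presented $U_\alpha$-schemes, so an isomorphism over $S$ descends to an isomorphism over some $U_\alpha$ by standard limit arguments (EGA IV). This $U_\alpha$ is the desired étale neighborhood.

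The main obstacle is the middle step: producing the actual Kummer presentation of each connected component from the abstract description of the tame fundamental group. Abelianness of $\pi_1^t$ alone only gives a finite étale cyclic cover of $S \setminus Z_S$; one must exhibit that this cover, when pulled back through normalization in the Kummer extension of function fields (using Zariski's Main Theorem \ref{grothzmt}), gives precisely the expected Kummer algebra. The verification that the extension of discrete valuation rings at the generic point of $Z_S$ is tame (degree prime to $p$) is where the tameness hypothesis re-enters and fixes the index $n$.
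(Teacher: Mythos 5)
The paper itself gives no argument for this corollary: it is imported verbatim from Grothendieck--Murre as \cite[2.3.4]{GM}, with only the surrounding remark that it rests on Lemma \ref{tamelem}. So there is no ``paper proof'' to match against; what you have written is essentially a reconstruction of the standard GM/SGA~1 argument, and its skeleton is the right one: localize at a geometric point using Lemma \ref{tamelem}, analyze the tame cover over the strictly Henselian local scheme $S$, and then spread the resulting isomorphism out to an \'etale neighborhood by finite-presentation limit arguments. The empty-$Z_S$ case, the orbit decomposition into connected components, and the descent step are all fine as stated.

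The one substantive caveat is the middle step, which you yourself flag. The paper's Theorem \ref{abhlem} records only that $\pi_1^t(S,Z_S)$ is abelian, and abelianness is genuinely weaker than what you use: you need the precise determination of $\pi_1^t(S,Z_S)$ as the prime-to-$p$ procyclic group in which the Kummer covers $\operatorname{Spec}\bigl(\mO_S[T]/(T^n-a)\bigr)$ are cofinal. Saying this identification ``is essentially the content of \cite[2.3.4]{GM}'' makes the key step circular as a standalone proof. The honest way to close it, along the lines you sketch at the end, is Abhyankar's lemma in its working form: if $e$ is the ramification index of $Y_S \to S$ at the generic point of $Z_S$ (prime to $p$ by tameness), then after base change along the Kummer cover $S' = \operatorname{Spec}\bigl(\mO_S[T]/(T^e-a)\bigr)$ the pulled-back cover is \'etale in codimension one, hence \'etale everywhere by the purity theorem \ref{znthm}, hence split because $S'$ is again strictly Henselian local; one then recovers each connected component of $Y_S$ as a quotient of $S'$ by a subgroup of $\mu_e$, which is again a Kummer cover, rather than deducing the presentation directly from the group $\pi_1^t$. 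With that substitution your outline becomes a complete proof; as written, the central identification is assumed rather than proved.
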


\subsection{Unit F-modules, F-crystals, and a theorem of N. Katz}

\hspace{10pt} In the characteristic zero setting, there is a well-known special case of the Riemann-Hilbert correspondence: algebraic vector bundles with connection correspond to local systems for the analytic topology.  This correspondence is given by the functor of flat sections.  The analogue of this correspondence in characteristic $p$ is due to N. Katz.  It gives an equivalence of categories between unit $F$-crystals and \'etale local systems by taking the \'etale kernel of $1-F$.\\

In this section, $X$ is a smooth variety over $\boldk$, which is now assumed to be of characteristic $p > 0$.\\

\begin{df} A unit $F$-module is a quasi-coherent sheaf $\mM$ equipped with an isomorphism $\phi: F^*\mM \cong \mM$ where $F^*$ denotes pull back along the absolute Frobenius. There is an important natural $p$-linear map $$F_{\phi}:\mM \rightarrow F^*\mM \shortstack{$\phi$ \\ $\longrightarrow$} \mM.$$  Since $\phi$ is typically not explicitly mentioned, the symbol $F$ is implicitly understood to be $F_{\phi}$.
\end{df}
\begin{rem} Let $\bD$ be Grothendieck's ring of differential operators.  A unit $F$-module is a special type of $\bD$-module.  To equip a unit $F$-module with the structure of a left $\bD$-module, one first expresses $\bD$ in terms of the divided power filtration $\bD = \cup \bD^{(m)}$.  Through Morita theory, $\bD^{(m)}$ acts on $(F^n)^*\mM$, which is isomorphic to $\mM$ by $\phi$.  The actions obtained this way are all compatible and give $\mM$ the structure of a $\bD$-module.  See \cite{Ha} or \cite{Lyu} for more details.
\end{rem}

\begin{df} A unit $F$-module whose underlying quasi-coherent sheaf $\mM$ is $\mO_X$-coherent is called a unit $F$-crystal. 
\end{df}
\begin{thm}\label{katzthm} \cite[4.1.1]{Ka} There is an equivalence of categories between $\mathbb{F}_p$-local systems on $X_{\acute{e}t}$ ($\mathbb{F}_p$-representations of $\pi_1^{\acute{e}t}(X)$) and unit $F$-crystals on $X$.
\end{thm}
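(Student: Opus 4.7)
The plan is to construct mutually inverse functors and reduce the verification to an \'etale-local computation. Let $V$ send an $\mathbb{F}_p$-local system $\mL$ to $V(\mL) = \mL \otimes_{\mathbb{F}_p} \mO_X$, equipped with the Frobenius structure coming from the natural isomorphism $F^{*}V(\mL) = \mL \otimes_{\mathbb{F}_p} F^{*}\mO_X \cong V(\mL)$; this is manifestly a unit $F$-crystal whenever $\mL$ has finite stalks. Let $Sol$ send a unit $F$-crystal $\mM$ to $\ker(1 - F_{\phi} : \mM \to \mM)$ computed on $X_{\acute{e}t}$. The main content is to show that $Sol(\mM)$ is actually an $\mathbb{F}_p$-local system and that the natural comparison maps $V \circ Sol \to \mathrm{id}$ and $Sol \circ V \to \mathrm{id}$ are isomorphisms.

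Both of these are \'etale-local statements, so I would fix a geometric point $\overline{x}$ and work over the strict henselization $R = \mO^{\acute{e}t}_{X,\overline{x}}$. Since $\mM$ is $\mO_X$-coherent and $\phi$ is an isomorphism, $\mM|_R$ is free of some rank $n$; choose a basis in which $F_{\phi}$ is given by the $p$-semilinear map $v \mapsto A\, v^{(p)}$ for some $A \in GL_n(R)$, where $v^{(p)} = (v_1^p, \ldots, v_n^p)^T$. Then solutions to $F_{\phi}(v) = v$ over an $R$-algebra $S$ are the $S$-points of
$$Y = \mathrm{Spec}\!\left(R[T_1, \ldots, T_n]\big/\bigl(T_i - \textstyle\sum_j A_{ij} T_j^p\bigr)_{i=1}^{n}\right).$$
The Jacobian entries $\partial(T_i - \sum_j A_{ij} T_j^p)/\partial T_k = \delta_{ik} - pA_{ik}T_k^{p-1} = \delta_{ik}$ in characteristic $p$, so $Y \to \mathrm{Spec}(R)$ is \'etale by the Jacobian criterion and finite of rank $p^n$. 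Being \'etale over a strict henselian base, $Y$ is a disjoint union of $p^n$ sections; since the defining equations are $\mathbb{F}_p$-linear with respect to Frobenius, these sections inherit the structure of an $\mathbb{F}_p$-vector space of dimension $n$. This identifies $Sol(\mM)|_R$ as a constant $\mathbb{F}_p$-local system of rank $n$.

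With this local description in hand, the natural transformations are straightforward. On the strict henselian stalk, $V(Sol(\mM))|_R \to \mM|_R$ is the evaluation map $\mathbb{F}_p^n \otimes_{\mathbb{F}_p} R \to R^n$, which is an isomorphism by the rank count; and $Sol(V(\mL))|_R \to \mL|_R$ is the identification of the $\mathbb{F}_p$-points of $Y$ for $V(\mL)$ (where $A = I$) with $\mL$ itself. The main obstacle is the \'etale-local triviality of $Sol(\mM)$ through the Artin--Schreier/Jacobian argument above, which relies crucially on the \emph{unit} hypothesis that $\phi$ (equivalently, $A$) is an isomorphism; without this the system defining $Y$ would fail to be \'etale of the expected rank, and the equivalence would break down.
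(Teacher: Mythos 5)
Your construction is correct, and it is in essence Katz's original argument rather than the route the paper takes: the paper does not reprove the theorem but cites Katz and describes the correspondence by factoring a representation $W$ through a finite Galois cover $\widetilde{X}/X$, tensoring to get $W\otimes_{\mathbb{F}_p}\mO_{\widetilde{X}}$ with the standard Frobenius, and performing Galois descent, with the inverse given (as in your proposal) by $\ker(1-F)$ on the \'etale site. What your route buys is a self-contained verification: by exhibiting the solutions as the sections of the finite \'etale scheme $Y$ cut out by $T_i-\sum_j A_{ij}T_j^p$ over a strictly henselian base, you prove directly that $Sol(\mM)$ is locally constant of rank $n$ and you isolate exactly where the unit hypothesis (invertibility of $A$, needed for finiteness of degree $p^n$) and coherence enter. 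What the paper's descent description buys is that it is the form actually used downstream (e.g.\ in Theorem \ref{starterthm}, where $\mM_W=\widetilde{\mM}_W^G$ for a Kummer cover), so the weight-space computations there read off immediately. Two points in your sketch deserve more than the words given to them: first, the local freeness of a coherent $\mM$ with $F^*\mM\cong\mM$ on a smooth $X$ is itself a (standard) lemma; second, and more importantly, the claim that the evaluation map $Sol(\mM)\otimes_{\mathbb{F}_p}R\to \mM|_R$ is an isomorphism ``by the rank count'' is not literally justified, since a map between free modules of the same rank need not be an isomorphism. One genuinely needs the classical statement that over the (separably closed) residue field the fixed points of a $p$-linear bijection span, proved by the minimal-relation independence argument together with a counting or determinant argument, and then Nakayama (or the determinant trick over $R$ directly, using that $\det$ of the evaluation map satisfies $d=ud^p$ with $u$ a unit) to conclude over $R$. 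With that lemma supplied, your proof is complete and equivalent in content to Katz's.
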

The correspondence is given in the following manner:\\

To a representation $W$, factor the representation through $G=Gal(\widetilde{X}/X)$ for $\widetilde{X}/X$ some finite Galois cover.  Equip $W \otimes_{\mathbb{F}_p} \mO_{\widetilde{X}}$ with the $p$-linear map $\widetilde{F}(w \otimes f) = w \otimes f^p$ and give this sheaf the diagonal tensor product representation structure over $G$.  Taking the $G$-invariants (which is just Galois descent), one obtains an $\mO_X$-coherent unit $F$-module on $X$.\\

The inverse functor is given by the kernel sheaf of $1-F: \mM \rightarrow \mM$ on the \'etale site.  Emerton and Kisin (\cite{EK}) generalized this functor to the category of unit $F$-modules.  Using their notation, it will be referred to as the solution function $Sol(-)$.  Notice that $Sol|_{unit F-crystals} = (1-F)^{\checkmark}$.\\

\begin{df} A unit $F$-crystal is called trivial if it is isomorphic as unit $F$-modules to a free $\mO_X$-module with the usual Frobenius.
\end{df}

The last technical notion of this section is the $F$-module pull back.

\begin{df}\cite[2.3.1]{EK} Let $\pi: Y \rightarrow X$ be a map of schemes and $\mM$ a unit $F$-module on $X$.  Ignoring a standard shift, define the unit $F$-module pull back $\pi^!\mM$ to be the sheaf of $\mO_Y$ modules $\pi^*\mM$ equipped with the diagonal action of $F$.  This notation is selected to emphasize the extra $F$-module structure.

\end{df}

\section{The V-filtration for tame $F$-crystals on $U=X \setminus Z$}

Throughout this section $X$ will denote a smooth variety over $\boldk$ and $Z$ a smooth divisor.  Also, a certain abuse of terminology in notation will be employed.\\

\textbf{Terminology.} The phrase ``unit $F$-crystal on $U$'' will mean the push forward to $X$ of a unit $F$-crystal from $U$. 

\subsection{Motivating example: Kummer coverings ramified along a smooth divisor}

\hspace{10pt}Consider the characteristic zero situation where $X = \mathbb{A}^1_{\mathbb{C}}$ and $Z$ is the origin. Let $\pi: Y_n \rightarrow X$ be the degree $n$ Kummer covering of the origin and $\mM$ an $\mO_U$-coherent $\bD_U$-module on $U$ such that $Sol(\mM)$ is a trivial local system on $V=Y \setminus \pi^{-1}(Z)$.  $\mM|_V \cong \mO^r|_V$ is not just a $\bD_V$-module but also has a compatible action of $Gal(V/U)$.  $\mM$ is determined by a complex valued representation of $Gal(V/U)$ and is obtained by Galois descent.  To create a $V$-filtration on every algebraic vector bundle with rational connection, it is enough to create a $V$-filtration on $j_*\mO|_{V}^{\chi}$ for every character $\chi$ of $Gal(V/U)$.  The formal properties of definition clarify how to discover the $V$-filtration.  $j_*\mO|_V^{Gal(V/U)} \subset K(Y)$ must inherit the induced filtration from the filtration on $K(Y)$ by the valuation determined by $\pi^{-1}(Z)$ divided by $n$.  That is, $V^{\frac{j}{n}}K(Y)$ is the subset $K(Y)$ with valuation greater than $j$.\\

\hspace{10pt} Attempting to mimic this situation in positive characteristic, this section will consider degree $n$ cyclic coverings of $U$ which are totally ramified along $Z$.  For the technical purpose of forcing the representation theory to be semisimple, the limitation to tamely ramified coverings will be imposed.  This should not be viewed as an arbitrary restriction, as wildly ramified coverings are likely to correspond to the notion of irregular $\bD$-modules in characteristic zero.  The tamely ramified coverings of that are of interest are the Kummer coverings.  It will now be shown that the characteristic zero process described above survives for Kummer coverings.  It will be generalized to the tamely ramified case in \ref{mainthm}.\\

\begin{df}\textit{The standard filtration on $\mO_V$}.

Assume that $Z$ is smooth and irreducible with generic point $\eta$.  Let $Y \rightarrow X$ be a Kummer covering.  Let $\tilde{\eta}$ be the unique point lying over $\eta$ and $v=v_{\tilde{\eta}} : K(Y) \rightarrow \mathbb{Z}$ its associated valuation.  Define $V^{\frac{j}{d}}K(Y) = \{ c | v(c) \geq j \}$, where $d = v(\eta)$ and let $j_*\mO_V \subset K(Y)$ have the induced filtration where $j: U_Y=\pi^{-1}(U) \rightarrow Y$ is the natural inclusion.
\end{df}

\begin{thm}\label{starterthm} Assume that $Z$ is smooth and irreducible with generic point $\eta$, $Y \rightarrow X$ a connected Kummer covering, and $\tilde{\eta}$ the unique point lying over $\eta$.  Let $V \rightarrow U$ be the restriction of this cover; it is an \'etale Galois cover since $\boldk$ is algebraically closed.  Let $W$ be an $\mathbb{F}_p$-representation of $Gal(V/U)$ and consider the unit $F$-module $\widetilde{M}_W = W \otimes_{\mathbb{F}_p} \mO_V$ equipped with the standard Frobenius morphism $\widetilde{F}$. Endow $j_*\widetilde{M}_W$ with the filtration induced by the standard filtration on the second factor. If $\mM_W = \widetilde{\mM}_W^{G}$ and $F =\widetilde{F}^{G}$ (see \ref{katzthm}) equipped with the descended filtration, then $F$ induces
$$\mO_Z \otimes_{\mO_{Z^{(1)}}} Gr^{\frac{j}{d}}\mM_W \cong Gr^{\frac{jp}{d}}\mM_W.$$

\end{thm}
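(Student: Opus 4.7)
The plan is to work locally on $X$ using the explicit presentation of the Kummer covering, verify the isomorphism on $\widetilde{M}_W$ before descent, and then take $G$-invariants. Locally one may write $\mO_Y = \mO_X[T]/(T^n - a)$ with $a$ a local equation for $Z$ and $d = n$; the valuation $v = v_{\tilde\eta}$ satisfies $v(T) = 1$ and $v(a) = d$. Writing an element of $j_*\mO_V$ as $\sum_i f_i T^i$ with $f_i \in \mO_{X,\eta}$ and $f_i = 0$ for $i \ll 0$, the fact that the residues $i \bmod d$ are distinct yields $v(\sum f_i T^i) = \min_i(d\cdot v_Z(f_i) + i)$, where $v_Z$ is the valuation on $\mO_{X,\eta}$. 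Unwinding this shows that $Gr^{j/d} j_*\mO_V$ is locally free of rank one over $\mO_Z$, generated by the class of $T^j$; tensoring with $W$, $Gr^{j/d}\widetilde{M}_W$ is $\mO_Z$-free with basis $\{w \otimes T^j\}$ as $w$ runs through an $\mathbb{F}_p$-basis of $W$.

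Next, the Frobenius $\widetilde F(w \otimes f) = w \otimes f^p$ is $p$-linear and satisfies $v(f^p) = p\cdot v(f)$, so $\widetilde F$ carries $V^{j/d}$ into $V^{jp/d}$, and on graded pieces it sends $w \otimes T^j$ to $w \otimes T^{jp}$. Linearizing along the Frobenius $\mO_{Z^{(1)}} \to \mO_Z$ produces an $\mO_Z$-linear map sending a basis of a free module to a basis of a free module of the same rank, and is therefore an isomorphism
$$\mO_Z \otimes_{\mO_{Z^{(1)}}} Gr^{j/d}\widetilde{M}_W \cong Gr^{jp/d}\widetilde{M}_W.$$

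To descend to $\mM_W$, note that $\tilde\eta$ is the unique point over $\eta$, so every $g \in G$ fixes $\tilde\eta$ and preserves $v$; hence the filtration is $G$-stable and $V^{j/d}\mM_W = (V^{j/d}\widetilde M_W)^G$. Because $|G|=n$ is coprime to $p$, $\mathbb{F}_p[G]$ is semisimple and the functor of $G$-invariants is exact; it thus commutes both with forming associated graded (giving $Gr^{j/d}\mM_W \cong (Gr^{j/d}\widetilde M_W)^G$) and with the Frobenius pullback on the locally free $\mO_Z$-modules at hand. Since $\widetilde F$ is $G$-equivariant (the action on $W$ is $\mathbb{F}_p$-linear and the action on $\mO_V$ commutes with $f \mapsto f^p$), taking $G$-invariants in the previous display proves the theorem. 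The main obstacle is the explicit computation in step one, in particular tracking the interplay between the valuation on $K(Y)$ and the Kummer extension; everything after that is formal bookkeeping made possible by the tameness hypothesis $\gcd(n,p) = 1$.
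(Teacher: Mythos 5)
Your proof is correct, but it is organized differently from the paper's. The paper descends first: it decomposes $\overline{W}=W\otimes_{\mathbb{F}_p}\boldk$ into weight spaces, writes $\mM_W=\oplus_{\chi}W^{\chi}\otimes\mO_V^{\chi^{-1}}$, and reduces each character piece to the trivial one via the identity $V^{p^kj/d}\mO_V^{\chi^{p^k}}=V^{p^k(j-\epsilon)/d}\mO_V^{G}\pi^{p^k\epsilon}$ (multiplication by a power of the uniformizer), finishing with the elementary computation for $\mO_U$ with $V^{j}\mO_U=\mI_Z^{j}$; the Frobenius bookkeeping $\chi\mapsto\chi^{p}$ is absorbed by the canonical isomorphism $\boldk\otimes_{\boldk^{(1)}}W^{\chi}\cong W^{\chi^{p}}$. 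You instead compute first and descend last: you identify $Gr^{j/d}j_*\mO_V$ as the free rank-one $\mO_Z$-module on $T^{j}$, note that the linearization of $\widetilde{F}$ carries a basis to a basis, and only then take $G$-invariants, using $p\nmid|G|$ for exactness. Your route is shorter and never leaves $\mathbb{F}_p$-coefficients (no character decomposition and no $\chi\mapsto\chi^{p}$ bookkeeping, since you keep the $\mathbb{F}_p$-form $W$ upstairs), whereas the paper's route yields the explicit isotypic description of $\mM_W$ and of its graded pieces, which is what gets reused later (e.g.\ in the proof of \ref{compthm}). Two small points to tighten: in the formula $v(\sum_i f_iT^{i})=\min_i(d\,v_Z(f_i)+i)$ the index $i$ must run over a fixed set of residues mod $d$ (say $0\le i<d$), otherwise cancellations such as $aT^{0}-T^{d}=0$ defeat it; and the assertion that $(-)^{G}$ commutes with $\mO_Z\otimes_{\mO_{Z^{(1)}}}(-)$ deserves a line of justification: $G$ acts trivially on $\mO_Z=\mO_Y/(T)$, hence $\mO_Z$-linearly on the graded pieces, so with $|G|$ invertible the averaging idempotent exhibits the invariants as a functorial direct summand, and this splitting is preserved by the Frobenius base change. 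Neither point is a genuine gap.
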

\begin{proof}
$G$ is isomorphic to group of $d$-th roots of unity in $\overline{\mathbb{F}_p}$ since $\boldk$ is algebraically closed.  In particular, it is abelian and its representations over $\overline{\mathbb{F}_p}$ are semisimple.  For any representation $A$, denote by $A^{\chi} = \{ a \in A | \sigma(a) = \chi(\sigma)a \}$ the weight space for the character $\chi$.\\

Let $\overline{W} = W \otimes_{\mathbb{F}_p} \boldk$ and decompose $\overline{W} = \oplus_{\chi \in G^*} W^{\chi}$ into weight spaces. Then $$M_W = \widetilde{M}_W^G = \oplus_{\chi \in G^*} (W^{\chi} \otimes_{\boldk} \mO_V)^G = \oplus_{\chi \in G^*} \left(W^{\chi} \otimes_{\boldk} \mO_V^{\chi^{-1}} \right)$$

$$\Rightarrow Gr^{\frac{j}{d}}\mM_W \cong \oplus_{\chi \in G^*} W^{\chi} \otimes_{\overline{\mathbb{F}_p}} Gr^{\frac{j}{d}} \mO^{\chi^{-1}}_V$$\\

Thus the claim will follow if it is shown that the map
$$\mO_Z \otimes_{\mO_Z^{(1)}} W^{\chi} \otimes_{\boldk} Gr^{\frac{j}{d}}\mO_V^{\chi^{-1}} \rightarrow W^{\chi^p} \otimes_{\boldk} Gr^{\frac{pj}{d}}\mO_V^{\chi^{-p}}$$
is an isomorphism.  Recall that $\overline{W}$ is not just a representation of $G$ over $\boldk$, but one which came from a $\mathbb{F}_p$-vector space and hence has a compatible Frobenius action.  In particular, there is a canonical isomorphism $\boldk \otimes_{\boldk^{(1)}} W^{\chi} \rightarrow W^{\chi^p}$.  From this, it is necessary and sufficient to show that for every $\chi \in G^*$, $F$ induces an isomorphism
$$\mO_Z \otimes_{\mO_Z^{(1)}} Gr^{\frac{j}{d}}\mO_V^{\chi} \cong Gr^{\frac{pj}{d}}\mO_V^{\chi}.$$

Let $\pi$ be a uniformizing parameter of $\mO_{Y,\hat{y}}$.  Let $\sigma$ be a generator of $G$ and $\xi$ its value on $\pi$ (i.e. $\sigma(\pi) = \xi \pi$).  Let $\epsilon$ be the smallest natural number such that $\xi^{\epsilon} = \chi(\sigma)$.  First it will be proven that for every $k \geq 0$

$$V^{\frac{p^kj}{d}}\mO_V^{\chi^{p^k}} = V^{\frac{p^k(j-\epsilon)}{d}}\mO_V^{G}\pi^{p^k\epsilon}.$$

Multiplication by $\pi^{p^k\epsilon}$ is clearly an isomorphism on $\mO_V$ and it is an isomorphism between $V^{\frac{p^k(j-\epsilon)}{d}}K(Y)$ and $V^{\frac{p^kj}{d}}K(Y)$.  Thus, $V^{\frac{p^kj}{d}}\mO_V = V^{\frac{p^k(j-\epsilon)}{d}}\mO_V \pi^{p^k\epsilon}$.\\

The statement then follows because for $c=c'\pi^{p^k\epsilon} \in V^{\frac{p^kj}{d}}\mO_V$, $\sigma(c) = \chi(\sigma)c \Leftrightarrow \sigma(c'\pi^{p^k\epsilon}) = \chi(\sigma)^{p^k}c' \pi^{p^k\epsilon} \Leftrightarrow \sigma(c')\xi^{p^k\epsilon}\pi^{p^k\epsilon}=\sigma(c')\chi(\sigma)^{p^k}\pi^{p^k\epsilon}= \chi(\sigma)^{p^k}c' \pi^{p^k\epsilon} \Leftrightarrow \sigma(c') = c'$ .\\

Considering the cases $k=0$ and $k=1$, we obtain $Gr^{\frac{j}{d}}\mO_V^{\chi} = Gr^{\frac{j-\epsilon}{d}}\mO_V^{G}\pi^{\epsilon}$,  $Gr^{\frac{Pj}{d}}\mO_V^{\chi^p} = Gr^{\frac{p(j-\epsilon)}{d}}\mO_V^{G}\pi^{p\epsilon}$.  This respects the map induced by $F$ in that the diagram\\

$$\xymatrix{ V^{\frac{j-\epsilon}{d}}\mO_V^G \pi^{\epsilon} \ar@{->}[r]^{F} \ar@{->}[d]^{=} & V^{\frac{p(j-\epsilon)}{d}}\mO_V^G \pi^{p \epsilon} \ar@{->}[d]^{=} \\
V^{\frac{j}{d}}\mO_{V}^{\chi} \ar@{->}[r]^{F} & V^{\frac{pj}{d}}\mO_V^{\chi^{p}} }$$ 
commutes.  This gives the following commutative diagram,

$$\xymatrix{ Gr^{\frac{j-\epsilon}{d}}\mO_V^G \pi^{\epsilon} \ar@{->}[r]^{F} \ar@{->}[d]^{\cong} & Gr^{\frac{p(j-\epsilon)}{d}}\mO_V^G \pi^{p \epsilon} \ar@{->}[d]^{\cong} \\
Gr^{\frac{j}{d}}\mO_{V}^{\chi} \ar@{->}[r]^{F} & Gr^{\frac{pj}{d}}\mO_V^{\chi^p} }$$

Thus it is enough to show that $F$ induces this isomorphism precisely when $\chi =1$.  

Working locally, assume that $\mI_Z = (t)$.  $\mO_V^G = \mO_U$ and $\mO_U$ is given the standard Frobenius and the filtration $V^{j}\mO_U = \mI_{Z}^{j} = \mO_X t^j$.  $Gr^{j} \mO_U \cong \mO_Z$ naturally respecting $F$.  It is then clear that $F$ induces $\mO_Z \otimes_{\mO_Z^{(1)}} \mO_Z \cong \mO_Z$.
\end{proof}

\begin{rem} It is not true that the pull back of the descended filtration is the trivial filtration. 
\end{rem}

\subsection{Definition and uniqueness}

\begin{df}\label{vdef}
Let $\mM$ be a unit $F$-module.  A filtration $V^{\cdot}\mM$ is called specializing (along Z of depth $k$) if it is an exhaustive, separated, discrete and left continuous $\mathbb{Q}$-indexed descending filtration such that for every $i \in \mathbb{Q}$:
\begin{enumerate}
\item $V^{i}\mM$ is a quasi-coherent $\mO_X$-module.
\item $\mI_Z^k V^{i}\mM \subset V^{i+1}\mM$.
\item $F(V^{i}\mM) \subset V^{ip}\mM$.
\item $F$ induces $F_Z^*Gr^{i}\mM \cong Gr^{ip}\mM$ whenever $Gr^{i}\mM \neq 0$.
\end{enumerate}
\end{df}

An important property of the $V$-filtration in characteristic zero is that it is inconsequential whether the theory is developed for $\mathbb{C}$-indexed or $\mathbb{Z}$-indexed filtrations.  The next lemma gives an analogue of this phenomena in positive characteristic.

\begin{lem}\label{prelem}
Let $\mM$ be a unit $F$-module with $V^{\cdot}\mM$ and $W^{\cdot}\mM$ filtrations specializing $\mM$ (along $Z$ of depth $0$).
\begin{enumerate}
\item If $V^n\mM \subset W^n\mM$ for all integers $n$, then $V^{\cdot}\mM \subset W^{\cdot}\mM$.
\item If $V^n\mM = W^n\mM$ for all integers $n$, then $V^{\cdot}\mM = W^{\cdot}\mM$.
\end{enumerate}
\end{lem}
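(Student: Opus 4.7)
My plan is to reduce part 2 to part 1 (applying part 1 to both inclusions $V^n\mM \subset W^n\mM$ and $W^n\mM \subset V^n\mM$), and to prove part 1 by iterating the Frobenius $F$ to amplify a hypothetical degree gap until it violates the integer-indexed hypothesis.

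For part 1, I will argue by contradiction. Suppose $V^\alpha \mM \not\subset W^\alpha \mM$ for some $\alpha \in \mathbb{Q}$ and pick $m \in V^\alpha \mM \setminus W^\alpha \mM$. Since both filtrations are discrete, left-continuous, and separated, the rationals
$$\gamma = \max\{c \in \mathbb{Q} \mid m \in V^c \mM\}, \qquad \beta = \max\{c \in \mathbb{Q} \mid m \in W^c \mM\}$$
are well-defined, $m$ represents a nonzero class in both $Gr_V^{\gamma} \mM$ and $Gr_W^{\beta} \mM$, and by construction $\gamma \geq \alpha > \beta$, so the gap $\gamma - \beta$ is a fixed positive rational.

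Next I will iterate the Frobenius. Condition 4 of Definition \ref{vdef}, applied to both $V$ and $W$, gives $\mO_Z$-linear isomorphisms $F_Z^{\ast} Gr_V^c \mM \cong Gr_V^{pc} \mM$ and $F_Z^{\ast} Gr_W^c \mM \cong Gr_W^{pc} \mM$ whenever the respective source is nonzero; since the Frobenius on the smooth $\boldk$-scheme $Z$ is faithfully flat (Kunz), these isomorphisms carry nonzero classes to nonzero classes. Iterating, $F^k(m)$ has $V$-degree exactly $p^k \gamma$ and $W$-degree exactly $p^k \beta$. Combining the descending property with the integer-indexed hypothesis,
$$F^k(m) \in V^{p^k \gamma} \mM \subset V^{\lfloor p^k \gamma \rfloor} \mM \subset W^{\lfloor p^k \gamma \rfloor} \mM,$$
so the $W$-degree of $F^k(m)$ is at least $\lfloor p^k \gamma \rfloor \geq p^k \gamma - 1$. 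Hence $p^k \beta \geq p^k \gamma - 1$, i.e., $p^k (\gamma - \beta) \leq 1$, which fails for large $k$ since $\gamma - \beta > 0$.

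The main obstacle is confirming that $F^k(m)$ actually sits at the predicted $V$- and $W$-degrees rather than being absorbed into a deeper filtration level; this is the content of condition 4 of Definition \ref{vdef}, and at the level of individual classes it genuinely requires faithful flatness of Frobenius on the smooth base $Z$ to promote the isomorphism statement into preservation of nonzero individual classes.
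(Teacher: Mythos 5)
Your proposal is correct and takes essentially the same route as the paper's proof: both use condition 4 of Definition \ref{vdef} to see that $F$ is injective on the graded pieces, iterate Frobenius to stretch the gap between the $V$- and $W$-degrees of an element past length one, and then use the integer-indexed inclusion to force the degrees into agreement. Your explicit appeal to Kunz (faithful flatness of Frobenius on the smooth $Z$) simply spells out the injectivity step that the paper asserts without comment.
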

\begin{proof}
Let $r \in \mathbb{Q}$, $m \in V^{r}\mM$ and let $s$ be the unique rational number so that $m \in W^{s}\mM$ and $0 \neq \overline{m} \in Gr_W^{s}\mM$.  By condition $4$ of the definition, $F: Gr^{s}_W\mM \rightarrow Gr^{ps}_W\mM$ is injective so $F^n(m) \notin W^{> sp^n}\mM=\cup_{i > sp^n}W^i\mM$ for all $n$.  Yet $F^n(m) \in V^{r p^n}\mM$ for all $n$.  Thus for every $n$ there can exist no integer $t$ with the property that $ rp^n \geq t > sp^n$, as this would imply $F^n(m) \notin W^{>sp^n}\mM \supset W^t\mM \supset V^t\mM \supset V^{rp^n}\mM \ni F^n(m)$.  The property that for every $n$ no such integer exists is equivalent to the condition that $s \geq r$.  Hence $m \in W^{s}\mM \subset W^{r}\mM$.  The choice of $m$ and $r$ were arbitrary, so $W^r\mM \subset V^r\mM$ for all $r \in \mathbb{Q}$. The latter statement clearly follows from the former. 
\end{proof}

\begin{df} 
A specializing filtration is called super-specializing if it also satisfies the following properties:
\begin{enumerate}
 \item (SS1) $V^{0}\mM$ is $\mO_X$-coherent.
 \item (SS2) $\mI_Z V^{i}\mM = V^{i+1}\mM$ for all $i \in \mathbb{Q}$, $i \neq -1$.
 \item (SS3) The map multiplication by the divisor $f$ defining $Z$, $m_f: Gr^{i}_V\mM \rightarrow Gr_V^{i+1}\mM$ is an isomorphism for all $i \neq -1$.
\end{enumerate}
\end{df}

\begin{rem} The definition of specializing could use further axiomatization.  The use of depth is to preserve the property of having a specializing filtration after pulling back along flat covers.  The definition of super-specializing is too strong for the category of all unit $F$-modules when $X$ has dimension greater than one but is a correct definition for curves.   It should also be more properly called super-specializing of depth zero.  In the more general situation, one should look for filtrations which are super-specializing when pulled-back along curves.  See section five for examples of modules with high depth non-super specializing filtrations.\\
\end{rem}
\begin{pro}
If $\mM$ is a unit $F$-module with super-specializing filtration $V^{\cdot}\mM$ and specializing filtration $W^{\cdot}\mM$, then $V^{\cdot}\mM \subset W^{\cdot}\mM$.  In particular, when super-specializing filtrations exist they are unique.
\end{pro}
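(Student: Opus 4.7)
The plan is to reduce via Lemma~\ref{prelem} to checking $V^n\mM\subset W^n\mM$ for integers $n$, then handle $n=0$ by a coherence-plus-Frobenius argument, and finally propagate to all integers using the super-specializing structure of $V$.

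By Lemma~\ref{prelem}, it suffices to prove $V^n\mM\subset W^n\mM$ for every integer $n$; the general $\mathbb{Q}$-indexed inclusion is then automatic.

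For the base case $n=0$: SS1 gives $V^0\mM$ as $\mO_X$-coherent, so locally it is generated by finitely many sections $m_1,\dots,m_r$; by exhaustiveness of $W^\cdot\mM$, each $m_i$ lies in some $W^{t_i}\mM$, and setting $T=\min_i t_i$ gives $V^0\mM\subset W^T\mM$ locally. Now I apply the Frobenius mechanism from the proof of Lemma~\ref{prelem}: for $m\in V^0\mM$ with maximal $W$-position $s$, condition~4 forces $F$ to be injective on nonzero $W$-grads, hence $F^k(m)\notin W^{>sp^k}\mM$ for every $k$. On the other hand, condition~3 gives $F(V^0\mM)\subset V^0\mM\subset W^T\mM$, so $F^k(m)\in W^T\mM$. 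If $s<0$, then $sp^k<T$ for large $k$, forcing $W^T\mM\subset W^{>sp^k}\mM$, a contradiction. Therefore $s\geq 0$ and $V^0\mM\subset W^0\mM$.

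To extend to arbitrary integers $n$: for $n\geq 1$, SS2 gives $V^n\mM=\mI_Z^n V^0\mM$, and the identification of $Gr^i_W\mM$ as an $\mO_Z$-module (implicit in condition~4, since $F_Z^*$ must act on it) yields $\mI_Z W^i\mM\subset W^{i+1}\mM$; iterating from the base case then gives $V^n\mM\subset \mI_Z^n W^0\mM\subset W^n\mM$. For $n\leq -1$, take $m\in V^n\mM$; iterating SS2 (with inclusion, rather than equality, at the exceptional index $-1$) gives $t^{-n}m\in V^0\mM\subset W^0\mM$. Applying $F^k$ and combining $t^{-np^k}F^k(m)\in V^0\mM\subset W^0\mM$ with the depth-one inclusion $\mI_Z W^i\mM\subset W^{i+1}\mM$ on the $W$-side, the same Frobenius-injectivity-on-grads argument as in Lemma~\ref{prelem} forces the $W$-position $s$ of $m$ to satisfy $s\geq n$.

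The main technical obstacle is coordinating the negative-index case with the exceptional behavior of SS2 and SS3 at $i=-1$, where one has only inclusion rather than equality, together with ensuring that the Frobenius-injectivity on $W$-grads is enough to sharpen an a priori weak bound to $s\geq n$. This is resolved by Frobenius iteration, which multiplies indices by $p>1$ and thereby avoids any $np^k$ being pinned at $-1$, so that the SS2-based containment $\mI_Z^{-n}V^n\mM\subset V^0\mM$ transports cleanly through the $\mO_Z$-module structure on $Gr^i_W\mM$ into the desired inequality.
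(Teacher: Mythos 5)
Your overall strategy is the paper's: reduce to integer indices via Lemma~\ref{prelem}, settle $n=0$ by combining coherence of $V^0\mM$ (so $V^0\mM\subset W^T\mM$ for some $T$) with injectivity of $F$ on the nonzero $W$-graded pieces, and then propagate to $n\geq 1$ using (SS2) together with $\mI_Z W^i\mM\subset W^{i+1}\mM$. Those parts are essentially identical to the paper's proof. One small caution on the positive step: condition 4 of \ref{vdef} only forces $Gr^i_W\mM$ to be killed by $\mI_Z$, which for a $\mathbb{Q}$-indexed filtration gives $\mI_Z W^i\mM\subset W^{>i}\mM=\cup_{r>i}W^r\mM$, not $\mI_Z W^i\mM\subset W^{i+1}\mM$; the inclusion you actually need is the depth hypothesis (condition 2 of \ref{vdef}) imposed on $W$, which is what the paper implicitly invokes, so you should cite that rather than derive it from condition 4.

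The genuine gap is in the case $n\leq -1$. From $F^k(m)\in W^{sp^k}\mM\setminus W^{>sp^k}\mM$ and $t^{-np^k}F^k(m)\in V^0\mM\subset W^0\mM$ you cannot conclude $s\geq n$: membership in $W^0\mM$ only says the element sits deep in the $W$-filtration, and nothing in the axioms of a merely specializing filtration prevents multiplication by $t^{-np^k}$ from raising the $W$-position by strictly more than $-np^k$. Equivalently, the maps $m_t:Gr^i_W\mM\rightarrow Gr^{i+1}_W\mM$ need not be injective --- injectivity of $m_f$ on graded pieces is (SS3), which you may use for $V$ but not for $W$. The Frobenius-injectivity mechanism of Lemma~\ref{prelem} controls only the position of $F^k(m)$ itself; to exploit $t^{-n}m\in V^0\mM\subset W^0\mM$ you must control the class $\overline{t^{-n}m}\in Gr^{s-n}_W\mM$. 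This is exactly the point on which the paper's argument turns: when $\overline{t^{j}m}\neq 0$ in $Gr^{s+j}_W\mM$ it applies $F$-injectivity to the element $t^jm$ and gets $s+j\geq 0$, and it must separately dispose of the degenerate case $\overline{t^jm}=0$ (asserting this occurs only when $s=-j$, in which case the desired bound holds trivially). Your proposal never addresses the possibility that multiplication by $t^{-n}$ (or $t^{-np^k}$) kills the class in the $W$-associated graded, and the closing remark about the index $-1$ not being hit by $np^k$ is irrelevant to this issue; as written, the inference ``forces $s\geq n$'' does not follow, so the negative-index case needs the missing case analysis (or some substitute for injectivity of $m_t$ on $Gr_W$).
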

\begin{proof}
Let $m \in V^0\mM$ and let $l$ be the greatest integer so that $W^l\mM \supset V^0\mM$.  Such an $l$ exists since $W^{\cdot}\mM$ is exhaustive and $V^0\mM$ is $\mO_X$-coherent.  Let $s \in \mathbb{Q}$ be such that $m \in W^{s}\mM$ and $0 \neq \overline{m} \in Gr^{s}_W\mM$.  Then $F^n(m) \subset V^0\mM \subset W^l\mM$ and $F^n: Gr^{s}_W \mM \rightarrow Gr^{sp^n}_W\mM$ is injective.  Hence, $F^n(m) \notin W^{>sp^n}\mM$ for all $n$.  This can only happen if $l \leq sp^n$ for all $n$.  Hence, $s \geq 0$ and $m \in W^{0}\mM$.  As $m$ was chosen arbitrarily, $V^0\mM \subset W^0\mM$.  By property (SS1) of $V^{\cdot}\mM$, $V^{n}\mM \subset W^{n}\mM$ for all integers $n \geq 0$.\\

Let $m \in V^{-j}\mM$ with $j > 0$ and $s \in \mathbb{Q}$ so that $m \in W^{s}\mM$ and $0 \neq\overline{m} \in Gr_W^{s}\mM$.  In local coordinates, $\overline{t^j m} \neq 0 \in Gr^{s+j}_W\mM$ unless $-j = s$.  If $\overline{t^jm} \neq 0$, then $F^n(t^jm) \notin W^{>(s+j)p^n}$ by injectivity of the map $F^n: Gr^{s+j}_W\mM \rightarrow Gr^{(s+j)p^n}_W\mM$.  Yet $t^jm \in V^{0}\mM \subset W^{0}\mM$ and so it must be that $(s+j) \geq 0$.  In either situation, it follows that $s \geq -j$ and hence that $m \in W^{-j}\mM$.  As $m$ was arbitrary, $V^n\mM \subset W^n\mM$ for all $n \in \mathbb{Z}$.  Now use the previous lemma \ref{prelem}.\\
\end{proof}

\begin{lem} \label{pblem} Let $\pi: Y \rightarrow X$ be a flat cover and $E = \pi^{-1}(Z)$.  If $\mM$ is a unit $F$-module on $X$ with specializing filtration $V^{\cdot}\mM$, then $\pi^*V^{\cdot}\mM$ is a specializing filtration for $\pi^!\mM$.  If $V^{\cdot}\mM$ is super-specializing and $\pi$ is \'etale then $\pi^* V^{\cdot}\mM$ is super-specializing.
\end{lem}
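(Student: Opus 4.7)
The plan is to check each clause of Definition~\ref{vdef} (and then of super-specializing) directly, transporting it through $\pi^*$ using the exactness and colimit properties of flat pull-back together with the compatibility of the absolute Frobenius with any morphism.

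Two ingredients are established up front. First, since $\pi$ is flat, the natural map $\pi^*\mI_Z \to \mO_Y$ is injective with image $\mI_E$, so that $\mI_E^k = \pi^*(\mI_Z^k)$ for every $k$; moreover $\pi^*$ preserves inclusions of quasi-coherent subsheaves, finite intersections thereof, and arbitrary colimits. Second, there is a canonical isomorphism $F_Y^*\pi^* \cong \pi^*F_X^*$ coming from functoriality of the absolute Frobenius; this is precisely what equips $\pi^!\mM = \pi^*\mM$ with its diagonal $F$-structure. Combining these, the filtration $\pi^*V^\cdot\mM$ automatically inherits the formal indexing conditions (exhaustive, separated, discrete, left continuous, $\mathbb{Q}$-indexed descending), and one has the identification $Gr^i(\pi^*V^\cdot\mM) \cong \pi^*Gr^i_V\mM$.

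The four clauses of specializing then transfer as follows. Clause (1) is automatic. For (2), $\mI_E^k \cdot \pi^*V^i\mM = \pi^*(\mI_Z^k V^i\mM) \subset \pi^*V^{i+1}\mM$. For (3), the diagonal Frobenius on $\pi^!\mM$ is, under the identification $F_Y^*\pi^*\mM \cong \pi^*F_X^*\mM$, the pull-back of $F_\mM$, so it carries $\pi^*V^i\mM$ into $\pi^*V^{ip}\mM$. For (4), chaining the Frobenius base-change isomorphism with the identity for $Gr$ gives
$$F_E^*Gr^i(\pi^!\mM) \cong F_E^*\pi^*Gr^i_V\mM \cong \pi^*F_Z^*Gr^i_V\mM \cong \pi^*Gr^{ip}_V\mM \cong Gr^{ip}(\pi^!\mM).$$
For the super-specializing half, \'etaleness of $\pi$ guarantees that $E$ is again a smooth divisor, so (SS1)-(SS3) are well-posed; (SS1) holds because $\pi^*$ preserves $\mO$-coherence in our locally Noetherian setting, (SS2) is the $k=1$ specialization of the computation used for (2), and (SS3) follows because if $f$ locally defines $Z$ then $\pi^*f$ locally defines $E$, and the flat pull-back of the isomorphism $m_f$ is the isomorphism $m_{\pi^*f}$.

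The main delicate point is clause (4): one must keep track of the fact that $Gr^i_V\mM$ naturally lives on the depth-$k$ infinitesimal thickening of $Z$, that $F_Z^*$ means the absolute Frobenius on that thickening, and that the base-change isomorphism $F_Y^*\pi^* \cong \pi^*F_X^*$ restricts compatibly to the corresponding thickening of $E$. Once this bookkeeping is done, everything else is a formal consequence of flatness.
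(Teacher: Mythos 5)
Your overall route coincides with the paper's: verify the clauses of \ref{vdef} directly, using flatness to realize $\pi^*V^{\cdot}\mM$ as a subfiltration of $\pi^!\mM$ with $Gr^i(\pi^*V^{\cdot}\mM)\cong\pi^*Gr^i_V\mM$, the commutation of absolute Frobenius with $\pi$ to transfer clauses (3) and (4), and \'etaleness (so that $E$ is again a disjoint union of smooth divisors, $\pi^*f$ a local equation for $E$) to get (SS1)--(SS3). That part of your write-up matches the paper's argument.

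The genuine problem is your opening claim that $\pi^*\mI_Z\to\mO_Y$ has image $\mI_E$, hence that $\mI_E^k=\pi^*(\mI_Z^k)$ and that the depth $k$ is preserved in clause (2). A flat cover is allowed to be ramified along $Z$ --- the Kummer coverings motivating the whole section are the principal examples --- and then, with $E=\pi^{-1}(Z)$ carrying its reduced structure (the reading forced by the later statement that for $\pi$ \'etale $E$ is a disjoint union of smooth divisors), one has $\pi^*\mI_Z\mO_Y\subsetneq\mI_E$: for the cover $t\mapsto t^n$ one gets $\pi^*\mI_Z=(s^n)$ while $\mI_E=(s)$. Your equality $\mI_E^k\,\pi^*V^i\mM=\pi^*(\mI_Z^kV^i\mM)$ then fails (the honest inclusion goes the wrong way), and what is true is only that some power $\mI_E^{k_E}$ lies inside $\pi^*\mI_Z\mO_Y$ (Noetherianity, concretely the ramification indices along $Z$), so that $\pi^*V^{\cdot}\mM$ is specializing of depth $k_Ek$ rather than of depth $k$. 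This is exactly the point of the depth parameter: the paper's remark after the super-specializing definition says depth is there ``to preserve the property of having a specializing filtration after pulling back along flat covers,'' and the paper's proof of this lemma makes precisely this Noetherian comparison of powers. If you instead insist on the scheme-theoretic preimage your identity holds, but then $E$ is non-reduced for ramified covers, which sits outside the section's framework of reduced (smooth) divisors and still leaves $F_E$ in clause (4) to be reinterpreted; either way the depth bookkeeping cannot be elided. In the \'etale case $\pi^*\mI_Z=\mI_E$ does hold, so your super-specializing half is unaffected. (A smaller shared issue: separatedness requires $\bigcap_r\pi^*V^r\mM=0$, i.e.\ commutation of $\pi^*$ with an infinite intersection, which preservation of finite intersections alone does not give; the paper asserts this commutation for flat covers, and it is clear e.g.\ for finite flat, hence locally free, covers.)
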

\begin{proof}
First note because the map $\pi$ is flat, $\pi^*V^{\cdot}\mM$ is a subsheaf of $\pi^!\mM$.  It is obviously exhaustive, discrete, left continuous, $\mathbb{Q}$-indexed, and descending.  As $\pi$ is a flat cover, $\pi^* (\cap_r V^{r}\mM) = \cap_r \pi^* V^{r}\mM$ so it is also separated. For every $i \in \mathbb{Q}$, $\pi^*V^i\mM$ is quasi-coherent.  Since $Y$ is Noetherian there exists an integer $k_E$ such that $\mI_E^{k_E} \subset \mI_E^k$ and thus $\mI_E \pi^*V^{i}\mM \subset \pi^*\mI_Z V^{i}\mM \subset \pi^*V^{i+1}\mM$ and $F(\pi^*V^i\mM) \subset \pi^*F(V^i\mM) \subset \pi^*V^{ip}\mM$.  By flatness, there is a natural isomorphism $\pi|_{E}^*Gr^i_V\mM \cong Gr^i_{\pi^*V}\pi^!\mM$ which is compatible with the action of $F$.  The natural isomorphism $F_E^*\mO_{E^{(1)}} \cong \mO_{E}$ confirms condition $4$ of the definition is still true.\\

When $\pi$ is \'etale, $E$ is the disjoint union of smooth divisors since $Z$ is smooth. From this and working locally conditions (SS2) and (SS3) follow. Condition (SS1) is obvious.\\
\end{proof}
\begin{lem}\label{flatlem}
Let $\pi: Y \rightarrow X$ be a flat cover and $E = \pi^{-1}(Z)$.  Suppose that $\mN$ is a unit $F$-module on $Y$ with descent data over $X$ and $\mM$ the descent of $\mN$ to $X$.
\begin{enumerate}
\item If $\mN$ has a specializing filtration (along $E$) $V^{\cdot}\mN$ with compatible descent data then $\mM$ has a specializing filtration (along $Z$) given by descent.
\item If $\pi$ is \'etale and $\mN$ has a super-specializing filtration (along $E$) $V^{\cdot}\mN$ then it is automatically stable under the descent data and the descended filtration $V^{\cdot}\mM$ is also super-specializing.
\end{enumerate}
\end{lem}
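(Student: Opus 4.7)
The plan for part (1) is standard faithfully flat descent applied piece by piece. Given compatible descent data on each term $V^i\mN$, every $V^i\mN$ descends to a quasi-coherent $\mO_X$-submodule $V^i\mM$ of $\mM$ with $\pi^\ast V^i\mM = V^i\mN$; ordering is preserved because $\pi^\ast$ is exact and conservative on quasi-coherent sheaves. I would then verify each axiom of Definition \ref{vdef} by pulling back: the descending property, left-continuity, and discreteness transfer because arbitrary intersections and colimits of quasi-coherent subsheaves commute with the flat pullback $\pi^\ast$; exhaustiveness and separatedness likewise survive because $\pi$ is faithfully flat, so $\cup V^i\mM = \mM$ and $\cap V^i\mM = 0$ can be checked after pullback. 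The inclusions $\mI_Z^k V^i\mM \subset V^{i+1}\mM$ and $F(V^i\mM) \subset V^{ip}\mM$ are checked after $\pi^\ast$, using that $\pi^\ast \mI_Z$ generates an ideal contained in a sufficiently high power of $\mI_E$ (the same $k_E$ trick as in \ref{pblem}); condition (4) follows from the natural isomorphism $\pi|_E^\ast Gr^i_V\mM \cong Gr^i_{\pi^\ast V}\mN$ together with compatibility of $F_E^\ast$ with $F_Z^\ast$ under the étale base change.

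For part (2), the decisive idea is to combine the uniqueness statement for super-specializing filtrations (the Proposition preceding \ref{pblem}) with the étale-pullback compatibility of \ref{pblem}. Let $p_1, p_2: Y \times_X Y \rightarrow Y$ be the two projections, which are étale since $\pi$ is, and let $\varphi: p_1^!\mN \xrightarrow{\sim} p_2^!\mN$ be the descent isomorphism. By \ref{pblem}, both $p_1^\ast V^{\cdot}\mN$ and $p_2^\ast V^{\cdot}\mN$ are super-specializing filtrations on their respective pullbacks. Transporting $p_1^\ast V^{\cdot}\mN$ along the $F$-module isomorphism $\varphi$ produces yet another super-specializing filtration on $p_2^!\mN$. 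Uniqueness forces
\[
\varphi(p_1^\ast V^i\mN) = p_2^\ast V^i\mN \quad \text{for all } i \in \mathbb{Q},
\]
and the cocycle condition is automatic since it is already satisfied by $\varphi$ itself. Thus the filtration on $\mN$ carries canonical descent data.

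Applying part (1) then produces a specializing filtration $V^{\cdot}\mM$ on $\mM$ with $\pi^\ast V^{\cdot}\mM = V^{\cdot}\mN$. To upgrade it to super-specializing, I would check (SS1)--(SS3) after the faithfully flat pullback $\pi$: $\mO_X$-coherence of $V^0\mM$ follows from $\mO_Y$-coherence of $V^0\mN$ by faithfully flat descent of coherent sheaves; the equality $\mI_Z V^i\mM = V^{i+1}\mM$ pulls back to $\mI_E V^i\mN = V^{i+1}\mN$ using $\pi^\ast \mI_Z = \mI_E$ in the étale case, and equalities of quasi-coherent subsheaves are detected by faithful flatness; finally, $m_f: Gr^i_V\mM \rightarrow Gr^{i+1}_V\mM$ is an isomorphism because its pullback along the étale map $\pi$, identified with multiplication by the lift of $f$, is an isomorphism by hypothesis.

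The only real obstacle is the automatic compatibility claim of (2); everything else is bookkeeping under faithful flatness. That obstacle is dispatched cleanly by the uniqueness of super-specializing filtrations: once one knows that any two such filtrations on the same unit $F$-module agree, the descent datum has no choice but to preserve them.
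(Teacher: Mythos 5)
Your proposal is correct and follows essentially the same route as the paper: part (1) by descending each $V^i$ and verifying the axioms of a specializing filtration via faithfully flat pullback, and part (2) by noting that $p_1^*V^{\cdot}\mN$ and $p_2^*V^{\cdot}\mN$ are super-specializing on the pullbacks to $Y\times_X Y$ (Lemma \ref{pblem}), so uniqueness of super-specializing filtrations forces the descent isomorphism to respect the filtration, after which (SS1) follows by flat descent of coherence and (SS2)--(SS3) from $\pi$ being \'etale. The only cosmetic difference is that you spell out more of the routine bookkeeping (separatedness, exhaustiveness, the cocycle condition) than the paper does.
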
 
\begin{proof}\
\begin{enumerate}
\item Let $V^{\cdot}\mM$ be the descended filtration.  It clearly satisfies axioms $1$ and $2$ of \ref{vdef}.  By assumption, the descent data is compatible with $F$ so axiom $3$ is satisfied.  Axiom $4$ follows from the exactness of descent.
\item Let $p_1$ and $p_2$ be the usual projection maps $Y \times_X Y \rightarrow Y$.  By \ref{pblem}, there are unique super-specializing filtration on $p_1^!\mN$ and $p_2^!\mN$ obtained by pull-back.  The covering isomorphism $\phi: p_1^*\mN \rightarrow p_2^*\mN$ must then interchange these two filtrations since all the axioms of super-specializing filtrations are preserved under isomorphism.  Hence, there is a descent data of the filtration which is compatible with the descent data for $\mN$.\\

By the previous part of this proof, it is only left to check that the descended filtration satisfies the axioms (SS1), (SS2) and (SS3). The first is a classical result in commutative algebra (\cite[2.5.2]{EGAIV}) and the latter two follow directly from $\pi$ being unramified.
\end{enumerate}
\end{proof}

\subsection{Existence for tame $F$-crystals on $U$}
\begin{df} A unit $F$-crystal $\mM$ on $U$ (recall the terminology set at the beginning of this section) is said to be tame with respect $Z$ if for every closed point $\overline{z} \in Z$, there is a Zariski open neighborhood $W$ of $\overline{z}$ and a tamely ramified cover $Y \rightarrow W$ which trivializes $\mM|_{U \cap W}$ as a unit $F$-crystal. 
\end{df}
\subsubsection{Reduction to Kummer coverings}
\begin{lem} Any tame $F$-crystal is \'etale locally trivialized by generalized Kummer coverings.
\end{lem}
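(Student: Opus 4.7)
The plan is to combine the definition of tameness for $\mM$ with Corollary \ref{abhlem2}, which asserts that any tamely ramified cover of a smooth divisor becomes, \'etale-locally on the base, a disjoint union of Kummer coverings. Fixing a closed point $\bar{z} \in Z$, the tameness hypothesis furnishes a Zariski neighborhood $W$ of $\bar{z}$ and a tamely ramified cover $\pi \colon Y \to W$ trivializing $\mM|_{U \cap W}$ as a unit $F$-crystal. The goal is to refine $W$ by an \'etale neighborhood over which the same trivialization is realized by a (generalized) Kummer covering.

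I would first apply Corollary \ref{abhlem2} to $\pi$, which is legitimate because $Z \cap W$ is a smooth divisor of $W$. This produces an \'etale neighborhood $W' \to W$ of $\bar{z}$ over which $Y \times_W W'$ is isomorphic to a finite disjoint union $\coprod_i K_i$ of Kummer coverings of $W'$. After shrinking $W'$, one may assume that $W'$ is connected and that each $K_i \to W'$ is surjective. Pulling back the original trivialization along $W' \to W$ yields that $\mM|_{U \cap W'}$ becomes trivial as a unit $F$-crystal after base change along $\bigl(\coprod_i K_i\bigr) \cap U \to U \cap W'$, which is exactly the statement that a generalized Kummer covering trivializes $\mM$ on an \'etale neighborhood of $\bar{z}$.

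The only subtlety beyond unwinding definitions is that triviality of a unit $F$-crystal on a disjoint union decomposes componentwise, which is immediate because both the underlying quasi-coherent sheaf and the Frobenius isomorphism $\phi$ split as products over the components. If ``generalized Kummer covering'' is to be read as a single Kummer covering, one simply replaces $\coprod_i K_i$ by any of its surjective components $K_i \to W'$; since triviality on the disjoint union forces triviality on each piece, this single $K_i$ already trivializes $\mM|_{U \cap W'}$. Thus no real obstacle arises: the full content is carried by Corollary \ref{abhlem2} applied to the cover supplied by the definition of tameness, together with the definition-level observation about disjoint unions.
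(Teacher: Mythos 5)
Your proposal is correct and follows essentially the same route as the paper: apply Corollary \ref{abhlem2} to the tamely ramified cover furnished by the definition of tameness to obtain, in an \'etale neighborhood of each closed point, a disjoint union of Kummer coverings, over which the trivialization pulls back. The extra remarks about componentwise triviality on disjoint unions are harmless refinements of the same argument.
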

\begin{proof} Let $Y$ be a tamely ramified cover of a Zarsiki neighborhood $W$ trivializing $\mM|_{U \cap W}$ .  By \ref{abhlem2}, for each closed point $\overline{w} \in W$ there is an \'etale neighborhood $U'$ of $\overline{w}$ for which $Y_{U'} \rightarrow U'$ is a disjoint union of Kummer covering.
\end{proof}
\begin{thm} \label{redthm}If $\mM$ is a tame $F$-crystal then $\mM$ has a super-specializing V-filtration (along $Z$) if and only if for every closed point $\overline{z} \in Z$ there exists a pair $Y \rightarrow U'$ of a generalized Kummer cover trivializing $\mM_{U \times_X U'}$ and an \'etale neighborhood $U'$ of $\overline{z}$ such that $\mM|_{U'}$ has a super-specializing filtration.
\end{thm}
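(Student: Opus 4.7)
The plan is to handle the two directions separately: the forward direction falls out of pull-back compatibility, while the converse is an étale descent argument that crucially leans on the uniqueness of super-specializing filtrations.

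For the forward direction, I would note that if $\mM$ admits a super-specializing filtration $V^\cdot\mM$ along $Z$, then for any closed point $\bar z \in Z$, Corollary \ref{abhlem2} together with the tameness of $\mM$ provides an étale neighborhood $U' \to X$ of $\bar z$ on which $\mM|_{U \times_X U'}$ is trivialized by a Kummer cover, and Lemma \ref{pblem} then yields that $\pi^{\ast}V^\cdot\mM$ is a super-specializing filtration on $\mM|_{U'}$.

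For the converse, the strategy is to glue. Given the hypothesis, for each closed point $\bar z \in Z$ I would fix an étale neighborhood $U'_{\bar z} \to X$ of $\bar z$ together with its given super-specializing filtration on $\mM|_{U'_{\bar z}}$. Since closed points are dense in $Z$, the images of the $U'_{\bar z}$'s in $X$ cover a Zariski open $W$ containing $Z$, so $\pi: \widetilde W := \bigsqcup_{\bar z} U'_{\bar z} \to W$ is an étale cover. The component-wise local filtrations assemble into a super-specializing filtration on $\pi^{!}(\mM|_W)$, which by Lemma \ref{flatlem}(2) is automatically stable under the descent data and descends to a super-specializing filtration $V^\cdot(\mM|_W)$ on $\mM|_W$. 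To pass from $W$ to all of $X$, I would take $V^i\mM := j_{W,\ast}\bigl(V^i(\mM|_W)\bigr)$, where $j_W: W \hookrightarrow X$ is the open immersion; because $\mI_Z$ is the unit ideal on $X \setminus W \subseteq U$, the axioms (SS1)--(SS3) persist automatically on the extended filtration.

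The main obstacle is the descent step, which requires the componentwise super-specializing filtrations on $\widetilde W$ to be compatible on each double overlap $U'_{\bar z_1} \times_X U'_{\bar z_2}$. This is precisely where uniqueness of super-specializing filtrations (the proposition immediately preceding Lemma \ref{pblem}) carries the argument: by Lemma \ref{pblem} both pulled-back filtrations on the overlap are super-specializing, and uniqueness forces them to coincide. Without this uniqueness, the independently constructed local filtrations would have no reason to match on overlaps and the descent would collapse; with it in hand, Lemma \ref{flatlem}(2) packages the entire gluing transparently.
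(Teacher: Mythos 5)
Your overall strategy is the same as the paper's: necessity via pull-back (\ref{pblem}), and sufficiency by covering $Z$ with (finitely many, by quasi-compactness -- do pass to a finite subcover so your disjoint union is an honest flat cover) \'etale neighborhoods carrying super-specializing filtrations, invoking the uniqueness of super-specializing filtrations to force agreement on the overlaps, and descending via \ref{flatlem} to obtain the filtration on a Zariski open $W \supset Z$.

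The one step that fails as written is the extension from $W$ to $X$: setting $V^i\mM := j_{W,*}\bigl(V^i(\mM|_W)\bigr)$ does not define subsheaves of $\mM$. Sections of $j_{W,*}\bigl(V^i(\mM|_W)\bigr)$ over an open $V$ are sections of $\mM$ over $V \cap W$ only, and these need not extend across $V \setminus W$; so the pushforward of the filtration lands in the strictly larger sheaf $j_{W,*}(\mM|_W)$ rather than in $\mM$, and "the axioms persist" is not even meaningful for it. The paper's extension step is the correct one: axiom $2$ of \ref{vdef} forces every graded piece of a specializing filtration to be supported on $Z$, so on $U$ (and in particular on $W \cap U$) all steps of such a filtration coincide and, by exhaustiveness, equal $\mM$ there. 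Hence one extends by gluing $V^{\cdot}(\mM|_W)$ on $W$ with the trivial filtration $V^i = \mM$ on $U$ along $W \cap U$ (equivalently, take $V^i\mM = \mM \cap j_{W,*}\bigl(V^i(\mM|_W)\bigr)$ inside $j_{W,*}(\mM|_W)$). With that repair your argument coincides with the paper's proof.
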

\begin{proof}
Necessity follows from \ref{pblem}.
To prove sufficiency, the critical observation is that condition $2$ of \ref{vdef} requires $Gr^{i}_V\mM$ is supported on $Z$.  Thus $V^{\cdot}\mM|_{U}$ (if it exists) has to be the trivial filtration $V^{\cdot}\mM = \mM$.  By hypothesis, $Z$ may be covered by a finite number of \'etale neighborhoods $U'_i$ where a super-specializing $V$-filtration exists on $\mM|_{U_i}$.  By \ref{flatlem} each $U_i$ determines a $V$-filtration on a Zariski open neighborhood.  By the unicity of super-specializing filtrations, the filtrations must agree on double and triple intersections of the $\{U_i\}$.  As the collection $\{U_i\}$ covers $Z$, the sheaf condition ensures there is a Zariski open set $W' \supset Z$ where a super-specializing $V$-filtration exists on $\mM|_{W'}$.  Hence, by the earlier observation, it is known how to extend the $V$-filtration from a Zariski neighborhood of $W$ of $\overline{z}$ to $W \cup U$.
\end{proof}

\subsubsection{Existence}

\begin{lem}
In the situation of \ref{starterthm}, the filtration created is super-specializing for the module $\mM_W$.
\end{lem}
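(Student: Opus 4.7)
The plan is to establish the three super-specializing axioms (SS1)--(SS3) on top of the specializing properties already contained in the proof of \ref{starterthm}. Thanks to the weight decomposition developed there, the verification reduces to elementary bookkeeping on each character isotypic summand.

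I would begin by recording the explicit formula, implicit in \ref{starterthm}, for the induced filtration on each weight space. Working locally so that $\mI_Z=(t)$ and $\pi^d=t$, the identity
$$V^{j/d}\mO_V^{\chi^{-1}} = V^{(j-\epsilon)/d}\mO_V^{G}\cdot \pi^{\epsilon}$$
combined with the standard filtration $V^{r}\mO_U = t^{\lceil r\rceil}\mO_X$ on $\mO_V^{G}=\mO_U=\mO_X[t^{-1}]$ yields the closed form
$$V^{j/d}\mO_V^{\chi^{-1}} = t^{\lceil (j-\epsilon)/d\rceil}\pi^{\epsilon}\mO_X,$$
where $\epsilon=\epsilon(\chi)\in\{0,1,\ldots,d-1\}$ is the integer attached to $\chi$ in \ref{starterthm}. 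Summing over characters,
$$V^{j/d}\mM_W \;=\; \bigoplus_{\chi\in G^{*}} W^{\chi}\otimes_{\boldk} t^{\lceil (j-\epsilon(\chi))/d\rceil}\pi^{\epsilon(\chi)}\mO_X.$$

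With this description the three axioms are immediate. For (SS1), setting $j=0$ gives $V^{0}\mO_V^{\chi^{-1}} = \pi^{\epsilon}\mO_X$, a free $\mO_X$-module of rank one, so $V^{0}\mM_W$ is a finite direct sum of $\mO_X$-coherent pieces. For (SS2), both $\mI_Z V^{j/d}\mO_V^{\chi^{-1}}$ and $V^{(j+d)/d}\mO_V^{\chi^{-1}}$ simplify to $t^{\lceil (j-\epsilon)/d\rceil+1}\pi^{\epsilon}\mO_X$, so equality holds for every $j\in\mathbb{Z}$. For (SS3), the graded piece $Gr^{j/d}\mO_V^{\chi^{-1}}$ is nonzero precisely when $j\equiv\epsilon(\chi)\pmod d$, a condition invariant under $j\mapsto j+d$; hence the nonvanishing patterns of $Gr^{i}$ and $Gr^{i+1}$ agree weight by weight, and on each nonzero summand the map $m_{t}$ identifies, via the trivializations by $t^{m}\pi^{\epsilon}$ and $t^{m+1}\pi^{\epsilon}$, with the identity on $\mO_Z=\mO_X/t\mO_X$, which is an isomorphism.

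I anticipate no serious obstacle: the argument is essentially ceiling-function bookkeeping riding on the formula already proved in \ref{starterthm}. The only point meriting mild care is to verify that the jump locus of the filtration on each weight space is the arithmetic progression $\{j:j\equiv \epsilon(\chi)\pmod d\}$, which guarantees that (SS2) and (SS3) hold for every $i\in\mathbb{Q}$ (and in particular for every $i\neq -1$), rather than only at a subset of indices.
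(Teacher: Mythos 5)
Your verification is correct, and it is worth noting that the paper itself supplies no proof of this lemma at all: it is asserted as an immediate byproduct of the construction in Theorem \ref{starterthm}, so your ceiling-function computation is precisely the missing bookkeeping rather than an alternative to an argument in the text. The closed formula $V^{j/d}\mO_V^{\chi^{-1}} = t^{\lceil (j-\epsilon)/d\rceil}\pi^{\epsilon}\mO_X$ does follow from the identity $V^{j/d}\mO_V^{\chi^{-1}} = V^{(j-\epsilon)/d}\mO_V^{G}\pi^{\epsilon}$ proved there together with $V^{r}(j_*\mO_U)=t^{\lceil r\rceil}\mO_X$, and from it (SS1), (SS2) and (SS3) drop out exactly as you say, with the jump locus on each isotypic summand being the progression $j\equiv\epsilon \pmod d$, which is stable under $j\mapsto j+d$; extending from indices of the form $j/d$ to all rational indices is harmless by left continuity, as you note. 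Two small points to tidy: (i) the integer $\epsilon$ in \ref{starterthm} is attached to the character appearing in the exponent of $\mO_V$, so writing $\mO_V^{\chi^{-1}}$ you should take $\epsilon=\epsilon(\chi^{-1})$ (immaterial, since $\chi\mapsto\chi^{-1}$ permutes $G^{*}$, but the indexing as written is off by an inversion); (ii) ``super-specializing'' also presupposes the specializing axioms, so for completeness you should record that quasi-coherence and $\mI_Z V^{i}\subset V^{i+1}$ are immediate from the same closed formula, that $F(V^{i})\subset V^{ip}$ holds because Frobenius multiplies the valuation $v_{\tilde\eta}$ by $p$, and that axiom 4 (the isomorphism $F_Z^{*}Gr^{i}\cong Gr^{ip}$) is exactly the statement of \ref{starterthm}; with that sentence added your proof is complete.
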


\begin{thm}\label{mainthm}(Main Theorem) If $\mM$ is a tame $F$-crystal and $Z$ is smooth then $\mM$ admits a unique super-specializing V-filtration.
\end{thm}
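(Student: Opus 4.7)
Uniqueness is immediate from the proposition of subsection 3.2: a super-specializing filtration sits inside every specializing filtration, so two super-specializing filtrations on $\mM$ must coincide. For existence, the strategy is to verify the local hypothesis of Theorem \ref{redthm}, namely that for every closed point $\overline{z} \in Z$ there exist an \'etale neighborhood $U'$ of $\overline{z}$ and a Kummer covering trivializing $\mM|_{U \times_X U'}$ on which $\mM|_{U'}$ already carries a super-specializing filtration.

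Fix such a closed point $\overline{z} \in Z$. Tameness of $\mM$ furnishes a Zariski neighborhood $W \ni \overline{z}$ and a tamely ramified cover $Y \to W$ trivializing $\mM|_{U \cap W}$. Corollary \ref{abhlem2} then produces an \'etale neighborhood $U'$ of $\overline{z}$ over which $Y \times_W U'$ becomes a disjoint union of Kummer coverings; shrinking $U'$ if needed, I may assume $Z \cap U'$ is smooth and irreducible and that only a single connected component of $Y \times_W U'$ is relevant, placing us exactly in the setting of Theorem \ref{starterthm}. On this Kummer cover the pulled-back crystal is trivial as a unit $F$-crystal, hence by Katz's Theorem \ref{katzthm} of the form $\widetilde{\mM}_W = W \otimes_{\mathbb{F}_p} \mO_V$ for some $\mathbb{F}_p$-representation $W$ of the cyclic Galois group $G$, with $\mM|_{U'} \cong \widetilde{\mM}_W^{G}$. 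Theorem \ref{starterthm} supplies a natural descended filtration, and the unlabeled lemma immediately preceding the present theorem asserts that it is super-specializing. This finishes the local verification, and Theorem \ref{redthm} then assembles these local filtrations into a global super-specializing V-filtration on $\mM$.

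The genuine technical content has been placed in Theorem \ref{starterthm} and in the descent/pull-back machinery of Lemmas \ref{pblem} and \ref{flatlem}; the potential obstacle — compatibility of the filtrations constructed on overlapping \'etale patches — is already dispatched inside the proof of Theorem \ref{redthm}, which exploits uniqueness of super-specializing filtrations to glue. In effect, the main theorem is an assembly result, and no further calculation beyond what has been performed is required.
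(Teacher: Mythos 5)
Your proposal is correct and follows exactly the paper's own route: uniqueness from the proposition on super-specializing versus specializing filtrations, and existence by reducing to Kummer coverings via Theorem \ref{redthm} and then invoking Theorem \ref{starterthm} together with the lemma that the filtration constructed there is super-specializing. You merely spell out the intermediate steps (tameness, Corollary \ref{abhlem2}, Katz's theorem) that the paper's two-line proof leaves implicit.
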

\begin{proof} First use \ref{redthm} to reduce this question to a question about Kummer coverings. Now use the previous lemma.
\end{proof}

\begin{rem} One can replace smooth by normal crossings divisors by reworking \ref{starterthm}.
\end{rem}

\section{The tame nearby cycles functors}
In this section, assume that $X$ is affine for simplicity.  It will be necessary to work on the \'etale site in both the context of local systems and unit $F$-modules.  To differentiate the functors $p^*$ and $p_*$ on the \'etale site from the Zariski versions, the notation $p^*_{\acute{e}t}$ and $p_*^{\acute{e}t}$ will be used.
\subsection{Definition}
\subsubsection{The tame nearby cycles functor of Grothendieck and Deligne}

\begin{df}\cite[\'Expose I 2.7]{SGA7I}\label{delignedf} Let $f: X \rightarrow \mathbb{A}^1_{\boldk}$ be a regular function and $S$ the spectrum of the \'etale local ring of $\mathbb{A}^1_{\boldk}$ at the origin.  If $X_S = X \times_{\mathbb{A}^1_{\boldk}} S$ then $f$ defines a function $\tilde{f}: X_S \rightarrow S$.  Let $\overline{s} \in S$ be the closed point, $\eta \in S$ the generic point, $\eta^{t}$ a point defining the tame closure of $S_\eta$, and $S^{t}$ the normalization of $S$ in $\eta^{t}$.  Let $i$ be the inclusion of the special fiber into $X^t :=X_S \times_S S^t$, $j :U_S^t := U_S \times_S S_{\eta^t} \rightarrow X$, and $g:{X_S}^t \rightarrow X$ the natural map .   For an \'etale local system on $\mL$ on $X$, define the (underived) tame nearby cycles functor by

$$\Psi_f^{tame}(\mL) = i^*_{\acute{e}t}j^{\acute{e}t}_*j^*_{\acute{e}t}g^*_{\acute{e}t}\mL.$$
\end{df}

\begin{rem} It is important to remember that $\Psi^{tame}_f(\mL)$ is not just a sheaf on the special fiber, but that it is also equipped with a continuous action of the group $Gal(\eta^t/\eta)$.
\end{rem}

\begin{df} Using this action, the hypothesis of \ref{delignedf}, and following Beilinson's presentation \cite{Be}, define the tame unipotent nearby cycles functor $\Psi^{un,tame}_f(-)$ to be the composition of $\Psi_f$ with the functor of invariants $(-)^{Gal(\eta^t/\eta)}$.
\end{df}
\subsubsection{Unipotent nearby cycles functor for tame $F$-crystals}
\begin{df}(The unipotent nearby cycles functor for tame $F$-crystals)
Let $\mM$ be a tame local system on $U$, equip $\mM$ with its unique super-specializing $V$-filtration. Let $i: Z \rightarrow X$ then $\mM$ with super-specializing $V$-filtration $V^{\cdot}\mM$.  Define the nearby cycles of $\mM$ as

$$\Psi^{un}_{Z}(\mM) = Gr^{0}(\mM) \shortstack[c]{$F$ \\ $\longrightarrow$} Gr^{0}(\mM).$$

\end{df}

\subsection{Compatibility of definitions under the Riemann-Hilbert correspondence}
\begin{thm}\label{compthm} Suppose that $X$ is affine and $Z$ is smooth and defined by the function $f:X \rightarrow \mathbb{A}^1$.  Let $Sol(-)$ be the functor described in \ref{katzthm} taking unit $F$-crystals to local systems on the \'etale site of $X$. There is a natural isomorphism
$$\Psi_f^{tame,un}(Sol_U(-)) \simeq Sol_Z(\Psi_Z^{un}(-))$$

of sheaves on $Z_{\acute{e}t}$.
\end{thm}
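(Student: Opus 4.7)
The plan is to reduce the theorem to an \'etale-local computation where $\mM$ is trivialized by a Kummer covering and identify both sides with the constant local system $W^G$ on $Z$, where $W$ is the $\mathbb{F}_p$-representation of $G = \mathrm{Gal}(V/U)$ corresponding to $\mM$ under Katz's correspondence.

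First I verify that both sides are compatible with \'etale base change along a Zariski neighborhood of $Z$. On the $F$-module side, Lemmas \ref{pblem} and \ref{flatlem} show that the super-specializing $V$-filtration and hence $Gr^0_V$ commute with \'etale pullback, and $Sol_Z$ is defined as the kernel of $1-F$ on the \'etale site, so it commutes as well. On the Grothendieck-Deligne side, both $\Psi_f^{tame}$ and the operation of taking $\mathrm{Gal}(\eta^t/\eta)$-invariants commute with \'etale base change on $X$. Combining the tameness hypothesis with Corollary \ref{abhlem2}, around any closed point $\bar z \in Z$ one may choose an \'etale neighborhood $U' \to X$ together with a Kummer covering $Y \to U'$ trivializing $\mM|_{U' \cap U}$; it therefore suffices to construct a natural isomorphism in this local setting and to verify that the local isomorphisms glue.

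In the Kummer setting, $\mM = \mM_W$ for an $\mathbb{F}_p$-representation $W$ of $G$. The decomposition from the proof of Theorem \ref{starterthm} gives $Gr^{j/d}_V \mM_W = \bigoplus_\chi W^\chi \otimes_\boldk Gr^{j/d}\mO_V^{\chi^{-1}}$. A direct inspection of the standard filtration shows $Gr^0 \mO_V^{\chi^{-1}}$ is zero unless $\chi$ is trivial (the $\chi^{-1}$-weight subspace has minimum valuation equal to the smallest nonnegative integer $\epsilon$ with $\xi^\epsilon = \chi^{-1}(\sigma)$, which is positive precisely when $\chi$ is nontrivial), while $Gr^0 \mO_V^1 \cong \mO_Z$ with its natural Frobenius. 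Hence $\Psi_Z^{un}(\mM_W) = W^G \otimes_{\mathbb{F}_p} \mO_Z$ as a unit $F$-crystal on $Z$, and by Theorem \ref{katzthm}, $Sol_Z$ of this $F$-crystal is the constant local system $W^G$ on $Z_{\acute{e}t}$. On the other hand, $Sol_U(\mM_W)$ is the local system on $U_{\acute{e}t}$ corresponding to $W$ viewed as a representation of $\pi_1^{\acute{e}t}(U)$ factoring through $G$. Since $W$ is trivialized by the Kummer cover, which is contained in the tame cover near $\bar z$, the stalk of $\Psi_f^{tame}(Sol_U(\mM_W))$ at any geometric point of $Z$ is $W$ with $\mathrm{Gal}(\eta^t/\eta)$ acting through $G$; taking invariants produces exactly the constant local system $W^G$.

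The main obstacle is naturality together with the gluing step: one must show that the local identification with $W^G$ is functorial in $\mM$ and, crucially, independent of the chosen Kummer cover. Independence reduces, by passing to a common refinement, to comparing two identifications through an auxiliary Kummer cover, where uniqueness of the super-specializing $V$-filtration (Theorem \ref{mainthm}) on one side and the intrinsic construction of $\Psi_f^{tame,un}$ on the other pin down a canonical isomorphism. Functoriality in $\mM$ is immediate since both the Katz correspondence and the Grothendieck-Deligne nearby cycles are functorial, and the local identification is the identity on $W^G$. Once the local natural isomorphism is established, Lemma \ref{flatlem} combined with standard sheaf-gluing on $Z_{\acute{e}t}$ yield the desired global natural isomorphism.
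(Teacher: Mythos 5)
Your local computation over a Kummer cover is essentially the paper's Theorem \ref{starterthm}: in that setting $Gr^0_V\mM_W \cong (W\otimes_{\mathbb{F}_p}\boldk)^G\otimes_{\boldk}\mO_Z$ and both sides of the statement become the constant sheaf $W^G$ (your reason that $Gr^0\mO_V^{\chi^{-1}}$ vanishes for nontrivial $\chi$ should be phrased via the congruence class mod $d$ of the possible valuations rather than a ``minimum valuation,'' since sections of $j_*\mO_V$ may have poles, but the conclusion is right). The genuine gap is in your final paragraph. You never construct a global natural transformation between the two functors; you only produce, for each choice of Kummer cover and trivialization $\mM|_V\cong W\otimes_{\mathbb{F}_p}\mO_V$, an abstract identification of each side with $W^G$. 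Uniqueness of the super-specializing filtration (Theorem \ref{mainthm}) makes $Gr^0_V\mM$ canonical, but it does not manufacture a canonical map from $\Psi_f^{tame,un}(Sol_U(\mM))$ to $Sol_Z(Gr^0_V(\mM))$: both identifications with $W^G$ depend on the chosen cover and trivialization, and to glue you would have to check that changing these choices alters the two identifications by the \emph{same} automorphism of $W^G$ (a cocycle compatibility you assert via ``pin down a canonical isomorphism'' but never verify). As written, the gluing step is missing.

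The paper avoids this issue by defining the comparison map globally before any local computation: since $Sol_U(\mM)=Ker(1-F)$ and the composite $i^*_{\acute{e}t}j^{\acute{e}t}_*j^*_{\acute{e}t}g^*_{\acute{e}t}(-)^{Gal(\eta^t/\eta)}$ is left exact, $\Psi_f^{tame,un}(Sol_U(\mM))$ is naturally a subsheaf of $i^*_{\acute{e}t}j^{\acute{e}t}_*j^*_{\acute{e}t}g^*_{\acute{e}t}\mM$. The proof then reduces to two assertions that are local on the \'etale site and are checked over the Kummer neighborhoods exactly as in your second paragraph: the image of this natural map lies in the subsheaf coming from $V^0\mM$, and the induced map to the degree-zero graded piece is an isomorphism (using that the tame Galois action factors through $Gal(\eta_l/\eta)$ for $l$ divisible by the degrees of the covers). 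Because the map is globally defined, being an isomorphism can be checked locally with no compatibility conditions, and naturality in $\mM$ is automatic. If you prepend this construction of the map, your local computations complete the argument; without it, your proof does not establish the asserted natural isomorphism.
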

\begin{proof} To ease notations, replace $\Psi_f^{tame,un}$ and $\Psi_Z^{un}$ by $\Psi_f$ and $\Psi_Z$.  The functor $i^*_{\acute{e}t}j^{\acute{e}t}_*j^*_{\acute{e}t}g^*_{\acute{e}t}(-)^{Gal(\eta^t/\eta)}$ is left exact and $Sol(\mM) = Ker(\mM \shortstack{$1-F$ \\ $\longrightarrow$} \mM)$ so there is a natural isomorphism
$$\Psi_f(Sol(\mM)) = i^*_{\acute{e}t}j^{\acute{e}t}_*j^*_{\acute{e}t}g^*_{\acute{e}t}(Sol(\mM)))^{un} \cong Kern(i^*_{\acute{e}t}j^{\acute{e}t}_*j^*_{\acute{e}t}g^*_{\acute{e}t}\mM \shortstack{$1-F$ \\ $\longrightarrow $} i^*_{\acute{e}t}j^{\acute{e}t}_*j^*_{\acute{e}t}g^*_{\acute{e}t}\mM)^{Gal(\eta^t/\eta)}.$$ 
The question reduces down to the two following questions.

\begin{enumerate}
 \item The natural map $\Psi_f(Sol(\mM)) \rightarrow i^*_{\acute{e}t}j^{\acute{e}t}_*j^*_{\acute{e}t}g^*_{\acute{e}t}\mM$ has image in the subsheaf $i^*_{\acute{e}t}j^{\acute{e}t}_*j^*_{\acute{e}t}g^*_{\acute{e}t}V^{0}\mM$.
 \item The induced map $\Psi_f(Sol(\mM)) \rightarrow Gr_{i^*_{\acute{e}t}j^{\acute{e}t}_*j^*_{\acute{e}t}g^*_{\acute{e}t}V^{\cdot}\mM}^0\mM$ is an isomorphism of sheaves.
\end{enumerate}

\begin{enumerate}
\item
By the universal properties of sheafification, it is only necessary to check these conditions locally on the level of presheaves on the \'etale site. Let $\overline{z}$ be a closed point of $Z$ and $W$ an \'etale neighborhood which makes $\mM|_W$ trivialized by a Kummer covering.  By the definition of the pull back it suffices to show $Ker(1-F)(W) \subset V^0\mM(W)$.  This is obvious by the constructions in the proof of \ref{starterthm}.  In fact, one can repeat this with any \'etale neighborhood $W' \rightarrow W$ of $\overline{z}$, so as sheaves on the \'etale site of $X$, the natural map $Ker(1-F)|_W \rightarrow \mM|_W$ has presheaf image inside $V^0\mM|_W$.\\  Recall that $V^0\mM|_U=\mM|_U$ so $Ker(1-F)|_U \subset V^0\mM|_U$.  Thus it has been shown that $Kern(1-F) \subset V^0\mM$ as sheaves.\\

\item Let $\{W_{i}\}$ be a finite collection of \'etale neighborhoods $W_i \rightarrow X$ covering $Z$, which make $\mM|_{W_i}$ trivialized by a Kummer covering $Y_i \rightarrow W_i$.  For each $i$, let $l_i$ be the degree of the extension $Y_i \rightarrow W_i$.  Let $S_l$ be the $l$-th degree Kummer extension of $S$ where $l=\prod_i l_i$ and consider the following diagram.
$$\xymatrix{{W_i}\times_X X_S^t  \ar@{->}[d] \ar@{<-}[r]^{j_t^i} & W_i \times {U_S}^t \ar@{->}[d]^{\pi_l} \\
W_i \times_X X_S \times_S S_l \ar@{->}[d]^{g_l^i} \ar@{<-}[r]^{j_l^i} & W_i \times_X U_S \times_S S_l \\
X & \\}$$

By construction, $(j^i_l)^!(g^i_l)^!\mM$ is a trivial unit $F$-crystal.  By the construction given in \ref{starterthm} the natural map of sheaves, $((g^i_l)^*_{\acute{e}t}Ker(1-F))^{Gal(\eta_l/\eta)} \rightarrow Gr^0_{(g^i_l)^*_{\acute{e}t}V}(g^i_l)^*_{\acute{e}t}\mM$ is an isomorphism over $W_i \times_X Z \times_X X_S \times_S S_l$ and this is its support. There is a natural isomorphism $(j^i_t)^{\acute{e}t}_*(\pi_l)^*_{\acute{e}t}(j^i_l)^*_{\acute{e}t}(g^i_l)^*_{\acute{e}t}(-)$ with $j_*^{\acute{e}t}j^*_{\acute{e}t}g^*_{\acute{e}t}(-)|_{W_i\times_X X^t_S}$ By choice of $l$, the action of $Gal(\eta^t/\eta)$ on $\Psi_f$ factors through $Gal(\eta_l/\eta)$ so taking unipotent parts upstairs and downstairs are the same.  Thus, the natural map is an isomorphism of sheaves $(\iota^i)^*(j_*j^*g^*Ker(1-F)^{un}|_{W_i \times_X X_S^t} \rightarrow (\iota^i)^*(j_*j^*g^*Gr^0_V\mM)|_{W_i \times_X X_S^t}$ where $\iota^i$ is the inclusion of the special fiber into $W_i \times X \times X_S^t$.  It has been shown that the natural map is an isomorphism locally so it is an isomorphism globally.

\end{enumerate}

\end{proof}

\begin{rem} For ease of exposition only the unipotent part of the nearby cycles functor was used.  One can define nearby cycles in general and this equivalence will still hold. However, since $Gr^{i}\mM$ is not a unit $F$-module, one has to use the categorical techniques employed in the next section to make sense of the functor $Sol(-)$ on this object.
\end{rem}

\section{Applications to $\mathbb{A}^1_{\boldk}$}
\hspace{10pt} Now some applications to the case when $X=\mathbb{A}^1_{\boldk}$ and $Z$ is the origin are explored to indicate that the $V$-filtration created in section three has some of typical properties of the characteristic zero situation.  Namely, it will be shown how to use the $V$-filtration to recover information without passing through the Riemann-Hilbert correspondence.  A typical application in characteristic zero is to try to recover representations of the fundamental group.  This task is too difficult in our setting, as the $V$-filtration only keeps track information \'etale locally near $Z$.  While every tame unit $F$-module is \'etale locally trivialized by a Kummer extension, it is not globally trivialized by a Kummer extension.  However, in \ref{tamethm} it is shown that one can use the information provided by the nearby cycles functor to recover representations which are trivialized by a Kummer covering.  This leads naturally into considering a gluing construction as it permits one to realize they can recover information very near $Z$ by the nearby cycles functor and the sheaf away from $Z$ is part of the gluing data. \\

\hspace{10pt} There is a technical challenge to this process.  First, the associated graded gives data $F: \mN \rightarrow \mN'$ with the property that $F^*\mN \rightarrow \mN'$ is an isomorphism but this is not exactly a unit $F$-module.  In the first section, some technical categorical equivalences are explored to make the data extraction of data in section two rigorous.  For a gluing theorem, one wants to put a $V$-filtration on the entire category on unit $F$-modules which is easiest to do for for extensions which are \'etale locally split.  This is the approach taken in section three in \ref{gluethm}.  In the last section, non-trivial extensions possessing $V$-filtrations are explored.\\
 
\subsection{Preliminary categorical equivalences}
For a topological group, $G$, denote the set of continuous characters into $\boldk$ (with the discrete topology) by $G^{\checkmark}$.  A map of vector spaces graded by an abelian group $A$ is said to be $p$-graded if it maps the $a^{th}$ component to the $p a^{th}$ space.

\begin{df}\label{catdf} Let $G$ be a profinite group.  Let $\mmC(G)$ be the category with objects ($V$ \shortstack[c]{$\tau$ \\ $\rightarrow$} $W$) such that:
\begin{enumerate}
\item $V$ is a $G^{\checkmark}$-graded finite dimensional $\boldk$-vector space.
\item $W$ is $G^{\checkmark}$-graded finite dimensional $\boldk$-vector space.
\item $\tau$ is a $p$-graded $p$-linear map.
\item The natural map $\tilde{\tau}: \boldk \otimes_{\boldk^{(1)}} V^{(1)} \rightarrow W$ is an isomorphism.
\end{enumerate}
A morphism between $\tau: V \rightarrow W$ and $\tau': V' \rightarrow W'$ in this category is given by a pair $(f,g)$ of graded maps such that $g \circ \tau = \tau' \circ f$.\\
\end{df}

\begin{thm}\label{catequiv} Suppose that $G$ is abelian and every finite quotient of $G$ has order coprime to $p$. Then there is an equivalence of categories,
$$Rep_{cont}(G, \mathbb{F}_p) \cong \mmC(G)$$
where $Rep_{cont}(G, \mathbb{F}_p)$ is the category of continuous finite dimensional representations over $\mathbb{F}_p$.
\end{thm}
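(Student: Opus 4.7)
The plan is to construct a pair of quasi-inverse functors, viewing an object of $\mmC(G)$ as essentially a $G$-equivariant unit $F$-crystal on $\mathrm{Spec}(\boldk)$: the iso $\tilde{\tau}$ equips the underlying $\boldk$-vector space with Frobenius-descent data, while the $G^{\checkmark}$-grading encodes a compatible $G$-action. The $p$-graded hypothesis on $\tau$ is precisely what makes the $G$-action commute with the Frobenius (on a weight space $V^{\chi}$ the element $g \in G$ acts by the scalar $\chi(g) \in \boldk^{\times}$, and the Frobenius shift $\chi \mapsto \chi^p$ in gradings matches the Frobenius-semilinearity of $\tau$).

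The forward functor $\Phi : Rep_{cont}(G, \mathbb{F}_p) \to \mmC(G)$ is exactly the construction used in the proof of Theorem \ref{starterthm}. Given a continuous finite-dimensional $\mathbb{F}_p$-representation $M$, set $\overline{M} := M \otimes_{\mathbb{F}_p} \boldk$ with diagonal $G$-action and take $V = W = \overline{M}$ with the character decomposition $\overline{M} = \bigoplus_{\chi \in G^{\checkmark}} \overline{M}^{\chi}$ as the grading; this decomposition exists because abelianness of $G$ combined with the coprime-to-$p$ hypothesis on finite quotients forces semisimplicity into $1$-dimensional irreducibles. Let $\tau$ be the standard $p$-linear Frobenius $m \otimes a \mapsto m \otimes a^p$; one verifies directly that $\tau(\overline{M}^{\chi}) \subseteq \overline{M}^{\chi^p}$ and that $\tilde{\tau}$ is an iso because $\overline{M}$ Frobenius-descends to $M$.

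The inverse functor $\Psi : \mmC(G) \to Rep_{cont}(G, \mathbb{F}_p)$ extracts an $\mathbb{F}_p$-representation by taking Frobenius-fixed points. Given $(V \xrightarrow{\tau} W)$, define $M := V^{\tau = 1}$, where $\tau$ is identified with an endomorphism of $V$ via the iso $\tilde{\tau}$. The point-version of Theorem \ref{katzthm} (equivalently Lang's theorem on $GL(V)$) provides a canonical iso $M \otimes_{\mathbb{F}_p} \boldk \cong V$, so $M$ is finite-dimensional over $\mathbb{F}_p$. The $G$-action inherited from the grading preserves $M$: writing $v = \sum v_{\chi_i} \in M$ supported on a Frobenius orbit $\{\chi_1, \dots, \chi_k\}$ with $v_{\chi_{i+1}} = \tau(v_{\chi_i})$, the identity $\chi_i(g)^p = \chi_{i+1}(g)$ (automatic from $\chi_{i+1} = \chi_i^p$) forces $\tau(gv) = gv$. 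The resulting $G$-action on $M$ is $\mathbb{F}_p$-linear and factors through the same finite-index quotient that controls the grading.

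The checks $\Psi \circ \Phi \cong \mathrm{id}$ and $\Phi \circ \Psi \cong \mathrm{id}$ are immediate consequences of Frobenius descent / Lang, with the $G$-action coming along naturally under the iso $\overline{M}^{F = 1} = M$. The main obstacle, in my view, lies in the careful bookkeeping of the two a priori distinct gradings on $V$ and $W$: one must verify that the $p$-graded hypothesis on $\tau$ together with $\tilde{\tau}$ being an iso force the grading on $W$ to be the Frobenius twist of that on $V$ (so that in particular $W$ can be identified with $V$ up to a canonical regrading, making $V^{\tau=1}$ unambiguous). Once this is pinned down, the theorem reduces to the $G$-equivariant version of Katz's correspondence over a geometric point, whose essential content is the coprime-to-$p$ descent available under the hypotheses on $G$.
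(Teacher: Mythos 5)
Your proposal is correct and follows essentially the same route as the paper: your $\Phi$ is the paper's functor $\mathbb{F}$ (tensoring up to $\boldk$ with the standard Frobenius and the character-space grading supplied by semisimplicity), and your $\Psi$ is the paper's $\mathbb{G}$ (Frobenius fixed points $Ker(id_V - m \circ \tilde{\tau}^{-1}\circ \tau)$, with the quasi-inverse checks resting on Katz's correspondence at the point $Spec(\boldk)$, i.e.\ Lang). Your explicit verification that the fixed-point space is $G$-stable via the Frobenius orbits of characters is precisely the ``respects the $G$-action'' check the paper makes in showing $\mathbb{G}$ is well-defined.
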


\begin{df} Define a functor $\mathbb{F}: Rep_{cont}(G, \mathbb{F}_p) \rightarrow \mmC(G)$ by the rule:\\
For every object $V$ of $Rep_{cont}(G, \mathbb{F}_p)$,
$\mathbb{F}(V) = \{V \otimes_{\mathbb{F}_p} \boldk \shortstack[c]{$\tau_V$ \\ $\rightarrow$} V \otimes_{\mathbb{F}_p} \boldk \}$ where $\tau_V(v \otimes c) = v \otimes c^p$ and
$V \otimes_{\mathbb{F}_p} \boldk$ is given a $G^{\checkmark}$-grading by decomposition of the action $G$.
For every morphism $f: V \rightarrow V'$, $\mathbb{F}(f) = (f \otimes id, f \otimes id)$.
\end{df}

\begin{pro}
The functor $\mathbb{F}$ is well-defined.
\end{pro}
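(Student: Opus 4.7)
The plan is to verify the four conditions of Definition \ref{catdf} for the pair $(V \otimes_{\mathbb{F}_p} \boldk \xrightarrow{\tau_V} V \otimes_{\mathbb{F}_p} \boldk)$ and then confirm that $\mathbb{F}$ is functorial. The starting observation is that since $V$ is a continuous finite-dimensional $\mathbb{F}_p$-representation of the profinite abelian group $G$, the action factors through a finite quotient $G/H$ whose order is coprime to $p$ by hypothesis. Applying Maschke to the finite group algebra $\boldk[G/H]$, the representation $V \otimes_{\mathbb{F}_p}\boldk$ decomposes into a direct sum of one-dimensional $\boldk$-subrepresentations; each such line has the form $(V \otimes \boldk)_\chi = \{x : g \cdot x = \chi(g)x\}$ for a unique continuous character $\chi \in G^{\checkmark}$ (the characters take values in $\mu_n(\boldk)$ for some $n$ coprime to $p$, which is fine since $\boldk$ is algebraically closed). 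This produces the $G^{\checkmark}$-grading and gives finite-dimensionality simultaneously, verifying conditions (1) and (2).

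For condition (3), $p$-linearity is a direct computation: $\tau_V(\lambda(v \otimes c)) = v \otimes (\lambda c)^p = \lambda^p \tau_V(v \otimes c)$. The key point is that $\tau_V$ is $G$-equivariant, because the $G$-action $g \cdot (v \otimes c) = g(v) \otimes c$ only affects the left tensor factor while $\tau_V$ only affects the right. Combining equivariance with $p$-linearity, if $x \in (V \otimes \boldk)_\chi$ then
$$g \cdot \tau_V(x) = \tau_V(g \cdot x) = \tau_V(\chi(g)\, x) = \chi(g)^p\, \tau_V(x),$$
so $\tau_V(x) \in (V \otimes \boldk)_{\chi^p}$, which is the $p$-graded condition. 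This single computation is essentially the crux of well-definedness on objects; I expect it to be the only step with any subtlety, because one has to be careful that the $p$-power appears on the character (via $p$-linearity) rather than as $\chi(g^p)$ or something else.

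For condition (4), the linearization $\tilde\tau_V \colon \boldk \otimes_{\boldk^{(1)}}(V \otimes_{\mathbb{F}_p}\boldk) \to V \otimes_{\mathbb{F}_p}\boldk$ sends $\lambda \otimes (v \otimes c) \mapsto v \otimes \lambda c^p$. Both sides have $\boldk$-dimension $\dim_{\mathbb{F}_p} V$, so it suffices to prove surjectivity, and surjectivity is immediate from $\tilde\tau_V(c \otimes (v \otimes 1)) = v \otimes c$. Finally, for morphisms, if $f \colon V \to V'$ is $G$-equivariant then $f \otimes \mathrm{id}$ preserves the weight space decomposition (so is graded) and commutes with $\tau$ on the nose by the same tensor factor argument as before; functoriality (respect for identities and composition) is then immediate. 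Put together, these checks show that $\mathbb{F}$ is a well-defined functor.
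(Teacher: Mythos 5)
Your proof is correct and follows essentially the same route as the paper: semisimplicity from the coprime-to-$p$ hypothesis plus abelianness gives the character-space grading for conditions (1)--(2), and the crux is the identical computation $g\cdot\tau_V(x)=\tau_V(\chi(g)x)=\chi(g)^p\tau_V(x)$ establishing the $p$-graded condition. You additionally spell out axiom (4) and functoriality on morphisms, which the paper dismisses as clear; this is fine and adds nothing that conflicts with the paper's argument.
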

\begin{proof}
First it will be checked the functor is well-defined on objects.
Let $V$ be a continuous $G$ representation over $\mathbb{F}_p$.

By the hypothesis on $G$ that every finite quotient has order coprime to $p$, the $G$ representation $V \otimes_{\mathbb{F}_p} \boldk$ will be semisimple.  Moreover, as $G$ is abelian, $V \otimes_{\mathbb{F}_p} \boldk$ will be a finite direct sum of character spaces.  In particular, it has a  grading by elements of the abelian group $G_1^{\checkmark}$. Thus $\mathbb{F}(V)$ satisfies conditions $1$ and $2$ of \ref{catdf}.\\

Axiom $4$ of \ref{catdf} is clearly satisfied, it remains to show axiom $3$.\\

$\tau_V$ is clearly a $p$-linear map.  It is also a map of $G$ representations.  Suppose that $x \in V \otimes_{\mathbb{F}_p} \boldk$ is in the weight space of weight $\chi \in G^{\checkmark}$.  For any $g \in G$
\begin{eqnarray*}
g \tau_V(x) &=& \tau_V (g x)\\
							&=& \tau_V (\chi(g) x)\\
							&=& \chi(g)^p \tau_V(x),
\end{eqnarray*}

showing that $\tau_V$ is $p$-graded.
\end{proof}

\begin{df}\label{Gdf} Define a functor $\mathbb{G}: \mmC(G) \rightarrow Rep_{cont}(G, \mathbb{F}_p)$ by the following rule:\\
For every object $V \shortstack[c]{$\tau$ \\ $\rightarrow$} W$, $\mathbb{G}(\tau) = Ker(id_V - m \circ \tilde{\tau}^{-1} \circ \tau)$ with reference to the diagram,
$$ V \shortstack[c]{$\tau$ \\ $\longrightarrow$} W \shortstack[c]{$\tilde{\tau}^{-1}$ \\ $\longrightarrow$} \boldk \otimes_{\boldk^{(1)}} V \shortstack[c]{$m$  \\ $\cong$ \\ $\longrightarrow$} V.$$
$V$ is given a $G$ action by specifying that $G$ acts on the $\chi \in G^{\checkmark}$ graded piece via the character $\chi$.\\
For every morphism $(f,g)$ set $\mathbb{G}((f,g)) = f|_{\mathbb{G}(\tau)}$.
\end{df}

\begin{pro} The functor $\mathbb{G}$ is well-defined.
\end{pro}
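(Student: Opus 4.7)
The plan is to check two well-definedness requirements: that for each object $\tau : V \to W$ of $\mmC(G)$, the kernel $\mathbb{G}(\tau) = \ker(id_V - m \circ \tilde{\tau}^{-1} \circ \tau)$ is naturally a continuous finite-dimensional $\mathbb{F}_p$-representation of $G$, and that for each morphism $(f,g)$, the restriction of $f$ to $\mathbb{G}(\tau)$ lands in $\mathbb{G}(\tau')$ and is $G$-equivariant.

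First I would establish that $\mathbb{G}(\tau)$ is an $\mathbb{F}_p$-vector space of dimension $\dim_{\boldk} V$. Set $F_\tau := m \circ \tilde{\tau}^{-1} \circ \tau$. Since $\tau$ is $p$-linear while $\tilde{\tau}^{-1}$ and $m$ are $\boldk$-linear, $F_\tau$ is a $p$-linear endomorphism of $V$. Axiom (4) of \ref{catdf} asserts that $\tilde{\tau}$ is an isomorphism, so $(V, F_\tau)$ is precisely the finite-dimensional analogue of a trivialized unit $F$-crystal at a point. The classical fact underlying \ref{katzthm}, namely that for any $p$-linear endomorphism $F$ of a finite-dimensional $\boldk$-vector space $V$ whose linearization is bijective the fixed-point set $\ker(id - F)$ forms an $\mathbb{F}_p$-structure on $V$, then gives both the $\mathbb{F}_p$-vector space structure and the dimension count.

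Next I would equip $\mathbb{G}(\tau)$ with the $G$-action. The $G^\checkmark$-grading on $V$ induces an action of $G$ by $g \cdot v = \chi(g) v$ for $v \in V^\chi$. I would verify that each constituent of $F_\tau$ is $G$-equivariant: for $v \in V^\chi$, $p$-gradedness of $\tau$ gives $\tau(v) \in W^{\chi^p}$, and a direct computation shows $\tau(g \cdot v) = \chi(g)^p \tau(v) = g \cdot \tau(v)$; the map $\tilde{\tau}^{-1}$ is $G$-equivariant because $\tilde{\tau}$ is, and $m$ is $G$-equivariant by naturality. Hence $\ker(id - F_\tau)$ is $G$-stable. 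Continuity is automatic: only finitely many characters appear in the decomposition of $V$, so the action factors through the finite quotient of $G$ by the intersection of the kernels of those characters, which by the standing hypothesis on $G$ has order coprime to $p$.

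Finally, for a morphism $(f,g) : (\tau : V \to W) \to (\tau' : V' \to W')$, the compatibility $g \circ \tau = \tau' \circ f$ together with the functoriality of the linearization yields $\tilde{g} \circ \tilde{\tau} = \tilde{\tau}' \circ f$, and combined with naturality of $m$ this gives $f \circ F_\tau = F_{\tau'} \circ f$. Thus $f$ carries $\mathbb{G}(\tau)$ into $\mathbb{G}(\tau')$, and $G$-equivariance is immediate because $f$ is graded. The only delicate point in the whole argument is tracking the $p$-linear versus $\boldk$-linear nature of each map; modulo this bookkeeping, the proof is essentially a finite-dimensional shadow of the Katz correspondence (\ref{katzthm}) enriched by the $G^\checkmark$-grading that encodes the $G$-action.
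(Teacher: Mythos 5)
Your proof is correct and takes essentially the same route as the paper: the heart of both arguments is that $p$-gradedness plus $p$-linearity of $\tau$ makes $m\circ\tilde{\tau}^{-1}\circ\tau$ commute with the $G$-action defined via the $G^{\checkmark}$-grading, so the kernel is $G$-stable. You additionally spell out the $\mathbb{F}_p$-structure of the kernel, continuity, and the check on morphisms, which the paper's proof treats as immediate; this extra bookkeeping is sound and only makes the argument more complete.
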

\begin{proof}
The only thing to show is that $m \circ \tilde{\tau}^{-1} \circ \tau$ respects the action of $G$. $\tau$ is $p$-graded so it respects the $G$-action.    The multiplication map also respects the $G$ action.  It is enough to show that $\tilde{\tau}$ respects the $G$ action.  It will respect the $G$ action since $\tau$ did.
\end{proof}

\begin{proof}(of \ref{catequiv})
First it is shown that $Id \simeq \mathbb{G} \circ \mathbb{F}$.\\

Fix $V$ a $G$ representation over $\mathbb{F}_p$.  There is a map $n_V: V \rightarrow V \otimes_{\mathbb{F}_p} \boldk$ given by $v \mapsto v \otimes 1$.

One needs to check that image of $n_V$ is inside $\mathbb{G}(\mathbb{F}(V))$.  This will be shown by showing that $m \circ \tilde{\tau_V}^{-1} \circ \tau_V = \tau_V$.

\begin{eqnarray*}
(m \circ \tilde{\tau_V}^{-1} \circ \tau_V)(v) &=& m(\tilde{\tau_V}^{-1}(1 \cdot \tau(v))\\
																							&=& m(1 \otimes \tau(v))\\
																							&=& \tau(v)
\end{eqnarray*}

Thus $n_V: V \rightarrow (\mathbb{G} \circ \mathbb{F})(V)$ is a well-defined map of sets.  It is clearly a well-defined map of $G$ representations over $\mathbb{F}_p$.\\

It is easy to see that $n_V$ is an isomorphism.\\

It remains to show that $n_V$ satisfies the naturality condition to be a natural isomorphism.  The following diagram commutes for any map of representations $f: V \rightarrow V'$.

$$\xymatrix@1{V \ar@{->}[r]^-{n_V} \ar@{->}[d]^-{f}			& (\mathbb{G} \circ \mathbb{F})(V) \ar@{->}[d]^{(\mathbb{G} \circ \mathbb{F})(f)}  \ar@{->}[r]^-{=} & Ker(id-\tau_V) \ar@{->}[d]^-{(f \otimes id)|}\\
            V' 	\ar@{->}[r]^-{n_{V'}} & (\mathbb{G} \circ \mathbb{F})(V') \ar@{->}[r]^-{=} & Ker(id - \tau_{V'})}$$

This completes the proof of $\mathbb{G} \circ \mathbb{F} \simeq 1$.\\
            
The case of $\mathbb{F} \circ \mathbb{G} \simeq Id$ is now considered.\\

For an object $V \shortstack[c]{ $\tau$ \\ $\rightarrow$ } W$, define $n_{\tau}$ by the following commutative diagram:\\

$$\xymatrix@1{ (\mathbb{F} \circ \mathbb{G})(\tau) \otimes_{\mathbb{F}_p} \boldk \ar@{->}[d]^-{\tau_{\mathbb{F}(\tau)}} \ar@{->}[rr]^-{m} &  & V \ar@{->}[d]^-{\tau} \\
(\mathbb{F} \circ \mathbb{G})(\tau) \otimes_{\mathbb{F}_p} \boldk \ar@{->}[rr]^-{g_{\tau}} \ar@{->}[dr]^-{m} &  & W\\
& V \ar@{->}[ur]^-{\cong}_-{(m \circ \tilde{\tau}^{-1} \circ \tau)^{-1}} &}$$

That is, define $n_\tau=(m, g_{\tau})$.\\

It is clear that the maps $m$ and $g_{\tau}$ respect the action of $G_1 \times G_2$, thus they are graded maps of $G_2$ representations.

One can observe that by Katz's theorem applied to the point $Spec(\boldk)$, the map $m$ is an isomorphism.  By the diagram, $g_{\tau}$ is an isomorphism.\\ 

The naturality condition is the last item to check.  For this, consider the following diagram which is easily confirmed as commutative for any morphism $(f,g)$ of $\mmC(G)$.

$$\xymatrix{  (\mathbb{F} \circ \mathbb{G})(\tau) \otimes_{\mathbb{F}_p} \boldk \ar@{->}[dr]^-{\tau_{\mathbb{F}(\tau)}} \ar@{->}[rr]^-{m} \ar@{->}[ddd]^-{(\mathbb{G} \circ \mathbb{F})((f,g))} &  & V \ar@{->}[rr]^-{\tau} \ar@{-}[d]|!{"2,2";"1,5"}\hole & & W \ar@{->}[ddd]^-{g} \\
& (\mathbb{F} \circ \mathbb{G})(\tau) \otimes_{\mathbb{F}_p} \boldk \ar@{->}[urrr]^-{g_{\tau}} \ar@{->}[rr]^-{m} \ar@{->}[ddd]^-{(\mathbb{G} \circ \mathbb{F})((f,g))} & \ar@{->}[dd]^-{f} & V \ar@{->}[ur]^-{\cong}_-{(m \circ \tilde{\tau}^{-1} \circ \tau)^{-1}} \ar@{->}[ddd]^-{f} &\\
& & & &\\
  (\mathbb{F} \circ \mathbb{G})(\tau') \otimes_{\mathbb{F}_p} \boldk \ar@{->}[dr]^-{\tau_{\mathbb{F}(\tau')}} \ar@{->}'[r][rr]^-{m} &  & V' \ar@{->}'[r][rr]^-{\tau'} & & W' \\
& (\mathbb{F} \circ \mathbb{G})(\tau') \otimes_{\mathbb{F}_p} \boldk \ar@{->}[urrr]^-{g_{\tau'}}|!{"2,4";"5,4"}\hole \ar@{->}[rr]^-{m} &  & V' \ar@{->}[ur]^-{\cong}_-{(m \circ \tilde{\tau'}^{-1} \circ \tau')^{-1}} &\\}
$$

\end{proof}

\subsection{Equivalence of categories $Rep_{\mathbb{F}_p}(\pi_1^{K}(X,Z))$ and Kummer $F$-crystals}

\begin{df} If $\mM$ is a tame $F$-crystal then define the nearby cycles functor as
$$\Psi_Z(\mM) = Gr^{[0,1)}i^*_{\acute{e}t}\mM \shortstack{$F$ \\ $\longrightarrow$} Gr^{p[0,1)}i^*_{\acute{e}t}\mM .$$
\end{df}
\begin{df} Let $\pi_1^{K}(X,Z)$ denote the profinite quotient subgroup of $\pi_1^{tame}(X,Z)$ obtained by taking the inverse limit of Kummer extensions of $X$ ramified only at $Z$. There is an obvious natural isomorphism $$\pi_1^{K}(X,Z) \cong \prod_{p \nmid l} \mathbb{Z}_l.$$  A unit $F$-crystal will be called Kummer tame if it is trivialized by a Kummer covering of $X$.
\end{df}
\begin{thm}\label{tamethm}

$$\{\text{Kummer $F$-crystals} \} \cong Rep_{cont}(\pi_1^{K}(X,Z), \mathbb{F}_p)$$

Given by $\mathbb{G} \circ \Psi_{Z}$ where $\mathbb{G}$ is as in \ref{Gdf}.
\end{thm}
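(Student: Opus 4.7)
The plan is to combine Theorem \ref{catequiv} with the explicit graded-piece computation of Theorem \ref{starterthm}, and to identify the output of $\mathbb{G}\circ\Psi_Z$ with Katz's equivalence restricted to Kummer coverings. The group $\pi_1^K(X,Z)\cong\prod_{p\nmid l}\mathbb{Z}_l$ is abelian profinite with every finite quotient of order coprime to $p$, so the hypotheses of Theorem \ref{catequiv} are satisfied and $\mathbb{G}$ is already an equivalence $\mmC(\pi_1^K(X,Z))\cong Rep_{cont}(\pi_1^K(X,Z),\mathbb{F}_p)$. It therefore suffices to produce a natural isomorphism between $\Psi_Z$ and $\mathbb{F}\circ\mathbb{K}$, where $\mathbb{K}$ denotes Katz's functor of Theorem \ref{katzthm} restricted to Kummer $F$-crystals, since then $\mathbb{G}\circ\Psi_Z\simeq\mathbb{G}\circ\mathbb{F}\circ\mathbb{K}\simeq\mathbb{K}$ will be an equivalence of categories.

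The main bridge between these functors is a dictionary between the $\mathbb{Q}$-indexed grading on $Gr^{[0,1)}$ and the character grading of Definition \ref{catdf}. A continuous character $\chi$ of $\pi_1^K(X,Z)$ factors through $Gal(Y_d/X)\cong\mu_d(\boldk)$ for some $d$ coprime to $p$ and sends a generator to $\xi^a$ with $0\le a<d$ and $\xi$ primitive; the assignment $\chi\mapsto a/d$ yields a bijection between $\pi_1^K(X,Z)^\vee$ and the subset of $[0,1)\cap\mathbb{Q}$ with prime-to-$p$ denominators, which is precisely the support of $Gr^{[0,1)}\mM$ for a Kummer $F$-crystal. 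Under this identification, Theorem \ref{starterthm} already computes, for $\mM_W$ coming from a representation $W$ of $Gal(V/U)$, a decomposition $Gr^{j/d}\mM_W\cong\bigoplus_\chi W^\chi\otimes Gr^{j/d}\mO_V^{\chi^{-1}}$ in which each $Gr^{j/d}\mO_V^{\chi^{-1}}$ is one-dimensional over $\boldk$ for a unique $j\in[0,d)$; summing collapses this to a natural character-graded isomorphism $Gr^{[0,1)}i^*_{\acute{e}t}\mM_W\cong W\otimes_{\mathbb{F}_p}\boldk$. The analogous statement for $Gr^{[0,p)}$ identifies the Frobenius on $\Psi_Z(\mM_W)$ with the tautological $p$-linear map $w\otimes c\mapsto w\otimes c^p$, using exactly the isomorphism $F_Z^*Gr^{j/d}\mM_W\cong Gr^{jp/d}\mM_W$ established inside the proof of \ref{starterthm}. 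Hence $\Psi_Z(\mM_W)\simeq\mathbb{F}(W)$ in $\mmC(\pi_1^K(X,Z))$.

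Naturality in $\mM$ then follows because every morphism of Kummer $F$-crystals is induced (via Katz) by a morphism of the underlying representations, which respects the weight-space decompositions of \ref{starterthm}. Composing with the natural isomorphism $\mathbb{G}\circ\mathbb{F}\simeq\mathrm{id}$ from \ref{catequiv} yields $\mathbb{G}\circ\Psi_Z\simeq\mathbb{K}$, and the right-hand side is an equivalence by Theorem \ref{katzthm}. The main obstacle will be showing that the identification of gradings is independent of the particular Kummer cover $Y_d\to X$ used to trivialize $\mM$: one must check that enlarging $Y_d$ to $Y_{de}$ replaces $(a,d)$ by $(ae,de)$ and so preserves the rational number $a/d\in\mathbb{Q}/\mathbb{Z}$, and correspondingly that the graded pieces computed from either presentation agree. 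This follows from the uniqueness of the super-specializing filtration (Theorem \ref{mainthm}) applied to both covers, together with the compatibility of Galois descent with passage to the associated graded recorded in Lemma \ref{flatlem}.
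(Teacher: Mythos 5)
Your proposal is correct and is essentially the argument the paper intends: the paper's own proof of \ref{tamethm} is merely the assertion that $\mathbb{G} \circ \Psi_Z$ is fully faithful and essentially surjective, and your factorization of that functor through Katz's correspondence --- using the weight-space computation of \ref{starterthm} to identify $\Psi_Z(\mM_W)$ with $\mathbb{F}(W)$ and then invoking \ref{catequiv} --- supplies precisely the details behind that assertion, including the needed independence of the trivializing Kummer cover. The only point to watch is a harmless convention: since $W^\chi$ pairs with $\mO_V^{\chi^{-1}}$, the piece $W^\chi$ sits in degree $\epsilon/d$ with $\xi^{\epsilon}=\chi^{-1}(\sigma)$, so your identification of $\Psi_Z(\mM_W)$ with $\mathbb{F}(W)$ holds only up to precomposing the grading dictionary with inversion on the group, an auto-equivalence that does not affect the stated equivalence of categories.
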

\begin{proof}
The functor is clearly fully faithful and essentially surjective.
\end{proof}

\subsection{Quiver/Gluing description of locally split finitely generated tame unit $F$-modules on $\mathbb{A}^1_{\boldk}$.}

\hspace{10pt} As mentioned in the previous discussion, the category of tame local systems is too complicated to be expressed as a finite dimensional $\boldk$-vector space with the action of an operator.  Likewise, the category of tame finitely generated $F$-modules is too complex to be described by a quiver of finite dimensional vector spaces as it is in the characteristic $0$ case.  However, a theorem is now provided which allows one to recover information from the nearby and vanishing cycles quiver if some of the global data is remembered as in the classical gluing constructions.

\begin{df} With the notation of \ref{delignedf}, a unit $F$-module on $X$, $\mM$, is said to be split near $Z$ if its extension class $[\mM|_{S^{t}}]=0 \in Ext^1_{\mO_{S^t}[F]}(Im(\alpha), Ker(\alpha))$ where $\alpha : \mM \rightarrow j_*j^!\mM$ is the natural adjunction map.  It will be called tame split if on the largest subset which it is $\mO_X$-coherent, it determines a tamely ramified local system near $Z$ and if it is split near $Z$.
\end{df}
\begin{pro} Every object in the category of tame split unit $F$-modules on $\mathbb{A}^1_{\boldk}$ admits a unique $V$-filtration.
\end{pro}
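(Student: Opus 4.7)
My plan is to reduce to Theorem~\ref{mainthm} by decomposing $\mM$ near $Z$ into a piece on which the main theorem applies directly and a piece supported on $Z$ whose filtration is forced by the axioms. Let $j:U\hookrightarrow X$ be the largest open subset on which $\mM$ is $\mO_X$-coherent, so that $j^!\mM$ is a tame $F$-crystal on $U$ by hypothesis. The adjunction map $\alpha:\mM\to j_*j^!\mM$ gives a short exact sequence of unit $F$-modules
$$0\longrightarrow \mathrm{Ker}(\alpha)\longrightarrow \mM\longrightarrow \mathrm{Im}(\alpha)\longrightarrow 0,$$
with $\mathrm{Ker}(\alpha)$ supported on $Z=\{0\}\subset\mathbb{A}^1_{\boldk}$, and by the tame split assumption this sequence splits as unit $F$-modules after restriction to $S^t$.

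To construct the filtration, I would first apply Theorem~\ref{mainthm} to $j_*j^!\mM$ to obtain its unique super-specializing $V$-filtration, then pull it back along the embedding $\mathrm{Im}(\alpha)\hookrightarrow j_*j^!\mM$ to obtain a specializing filtration on $\mathrm{Im}(\alpha)$. For the kernel piece, which is a coherent unit $F$-module supported on a point, axioms~$(2)$ and $(4)$ of Definition~\ref{vdef} together with Lemma~\ref{prelem} and the unit $F$-structure force a canonical filtration (essentially concentrated in degree $0$, with the $\mO_X$-module structure captured by the powers of $\mI_Z$). Combining these two filtrations via the splitting over $S^t$ and descending along the prime-to-$p$ Galois action of $\mathrm{Gal}(\eta^t/\eta)$ in the style of Lemma~\ref{flatlem} produces a $V$-filtration on $\mM$ in an \'etale neighborhood of $Z$; away from $Z$ the filtration is trivial, as observed in the proof of Theorem~\ref{redthm}.

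Uniqueness would be handled in parallel: given two $V$-filtrations on $\mM$, I would pull back to $S^t$, use the splitting to compare separately on the image and on the kernel, invoke the uniqueness from Theorem~\ref{mainthm} on the image part and the forced rigidity for unit $F$-modules supported on a point for the kernel part, and then upgrade the integer-indexed agreement to $\mathbb{Q}$-indexed agreement via Lemma~\ref{prelem}. The central obstacle I anticipate is showing that any $V$-filtration on $\mM$ is compatible with the tame split decomposition after restriction to $S^t$. This should follow from the intrinsic characterization of $\mathrm{Ker}(\alpha)$ as the maximal $F$-submodule of $\mM$ set-theoretically supported on $Z$, combined with the Frobenius compatibility axiom~$(4)$ of Definition~\ref{vdef}, which ensures that any filtration satisfying the axioms respects this intrinsic sub-$F$-module structure.
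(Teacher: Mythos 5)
Your skeleton is the same as the paper's: reduce to a neighborhood of $Z$, split $\mM$ into $\mathrm{Im}(\alpha)$ and $\mathrm{Ker}(\alpha)$ using the tame split hypothesis, filter the image part via Theorem \ref{mainthm}, filter the kernel canonically, descend, and take the trivial filtration away from $Z$. The first genuine gap is the filtration on $\mathrm{Ker}(\alpha)$. The filtration that works (and that the paper uses) is the $t$-torsion filtration placed in strictly negative degrees, $V^{-i}\mathrm{Ker}(\alpha)=\{k\mid t^{i}k=0\}$ for $i\ge 1$ and $V^{j}\mathrm{Ker}(\alpha)=0$ for $j\ge 0$; its graded pieces sit in degrees $-1,-2,\dots$, so that multiplication by $t$ is an isomorphism $Gr^{i}\to Gr^{i+1}$ for all $i\neq -1$, exactly as (SS3) demands (compare the integer-degree pieces in \ref{curvesthm}). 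A filtration ``essentially concentrated in degree $0$'' does not satisfy the axioms: already for $\mathrm{Ker}(\alpha)=\Delta_Z$ one gets $\mI_Z V^{0}=\mI_Z\Delta_Z=\Delta_Z\neq 0=V^{1}$, so axiom $2$ of \ref{vdef}, (SS2) and (SS3) all fail at $i=0$. Your parenthetical about powers of $\mI_Z$ may be gesturing at the torsion filtration, but as written the step is wrong, and it is not cosmetic: super-specialization of the combined filtration is what powers everything downstream (part 2 of \ref{flatlem}, which supplies the automatic descent data, and uniqueness both require it), whereas your construction only ever claims a specializing filtration on $\mathrm{Im}(\alpha)$ and an unspecified one on $\mathrm{Ker}(\alpha)$, so nothing in your write-up establishes that the direct sum is super-specializing.

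The second gap is the descent step. The split hypothesis gives a splitting only after pulling back to $S^{t}$, which is a pro-finite cover of the henselian local scheme $S$, not a flat cover of $X$ of the kind \ref{flatlem} treats; even after descending along $\mathrm{Gal}(\eta^{t}/\eta)$ you would only have data over the henselization and would still need to spread it out to an honest \'etale neighborhood of $Z$. The paper instead first upgrades the splitting over $S^{t}$ to a splitting of $\mM$ on an \'etale neighborhood of $Z$ — this is where tameness enters, since the obstruction lies in $H^{1}$ of the pro-prime-to-$p$ group $\mathrm{Gal}(\eta^{t}/\eta)$ with coefficients in a $p$-torsion Hom-module, which vanishes — and only then applies \ref{flatlem}, where it applies verbatim; your proposal needs this intermediate step and does not supply it. Finally, your uniqueness plan is both incomplete and unnecessary: knowing that $\mathrm{Ker}(\alpha)$ is the maximal $F$-submodule supported on $Z$ does not give compatibility of an arbitrary filtration with the splitting, but no such compatibility is needed, because once a super-specializing filtration is exhibited, uniqueness is immediate from the proposition preceding \ref{pblem} (a super-specializing filtration is contained in every specializing one), which is how the paper gets it.
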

\begin{proof}
By previous considerations, one only needs to show it exists in a Zariski neighborhood of $Z$.  Take this Zariski neighborhood to be $Z$ union with the largest open set on which $\mM$ is $\mO_X$-coherent. Using the splitting and tameness conditions, in an \'etale neighborhood of $Z$, $\mM \cong Im(\alpha) \oplus Ker(\alpha)$.  The filtration on the local system $Im(\alpha)$ has already been discovered and define the filtration on $Ker(\alpha)$ by setting $V^{i}Ker(\alpha)=\{ k  | t^i k =0\}$.  The direct sum filtration is super-specializing so by \ref{flatlem} it determines a $V$-filtration in a Zariski neighborhood of $Z$.
\end{proof}

\begin{df} Define the (unipotent) vanishing cycles sheaf to be $Gr^{-1}(\mM) \shortstack{$F$\\ $\rightarrow$} Gr^{-p}(\mM)$.  There is a natural map $(t,t^p)$ from this object to $\Psi_Z^{un}(\mM)$
\end{df}

\begin{df}
Let $\mG$ be the category of with objects triples $(\mN, V \rightarrow W, a)$ where $\mN$ is a finitely generated unit $F$-module on $U$, $V \rightarrow W$ is a $p$-linear map and $a: (V \rightarrow W) \rightarrow \Psi_Z^{un}(j_*\mN)$.  This category is the category of gluing data.
\end{df}
\begin{thm}\label{gluethm}There is an equivalence of categories

$$\{\text{Tame split l.f.g.u $F$-modules}\} \simeq \mG.$$
\end{thm}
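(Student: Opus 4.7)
The plan is to define quasi-inverse functors $\Phi$ and $\Psi$ between the two categories. For the forward functor, send a tame split l.f.g.u.\ $F$-module $\mM$ to the triple
$$\Phi(\mM) = \bigl(\mM|_U,\ Gr^{-1}\mM \xrightarrow{F} Gr^{-p}\mM,\ (t,t^p)\bigr),$$
using the unique super-specializing $V$-filtration produced by the previous proposition; the map $(t,t^p)$ is the natural map to $\Psi_Z^{un}(j_*(\mM|_U))$ defined just before the theorem statement. Functoriality is automatic from the uniqueness of the $V$-filtration and the naturality of the associated graded.

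For the inverse functor $\Psi$, take gluing data $(\mN, \tau\colon V \to W, a)$ and build a tame split $F$-module as follows. Decompose $(V \to W)$ as an extension $0 \to (V_0 \to W_0) \to (V \to W) \to (V/V_0 \to W/W_0) \to 0$ where $V_0 = \ker(a|_V)$ and $W_0 = \ker(a|_W)$, so that the quotient term injects via $a$ into $\Psi_Z^{un}(j_*\mN)$. Apply Theorem \ref{tamethm} to the image submodule of $\Psi_Z^{un}(j_*\mN)$ identified by $a$ to obtain a Kummer tame $F$-crystal $\mM_{\mathrm{coh}}$ defined on a Zariski neighborhood of $Z$, together with an identification of $\mM_{\mathrm{coh}}|_U$ with the crystalline part of $\mN$. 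Separately, construct a $Z$-supported unit $F$-module $\mT$ whose $(Gr^{-1}, Gr^{-p})$ quiver is precisely $(V_0 \to W_0)$; concretely, one may take $\mT$ to be a quotient of $(V_0 \otimes_{\boldk} \mO_X) \oplus (W_0 \otimes_{\boldk} \mO_X)$ killed by suitable powers of $t$, with Frobenius prescribed by $\tau|_{V_0}$ and with the tautological $t$-torsion $V$-filtration. Define $\Psi(\mN,\tau,a)$ to be the Zariski sheaf obtained by gluing $\mN$ on $U$ with $\mM_{\mathrm{coh}} \oplus \mT$ in a neighborhood of $Z$.

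The main technical obstacle will be verifying $\Phi \circ \Psi \simeq \mathrm{Id}$, i.e.\ that the $V$-filtration extracted from $\Psi(\mN,\tau,a)$ reproduces the original quiver and the map $a$. This reduces, via the direct sum decomposition $\mM_{\mathrm{coh}} \oplus \mT$, to two computations already in hand: the explicit Kummer computation of \ref{starterthm} shows that the vanishing cycles of $\mM_{\mathrm{coh}}$ recover $(V/V_0 \to W/W_0)$ with $a$ the isomorphism onto its image inside $\Psi_Z^{un}(j_*\mN)$, while the tautological filtration on $\mT$ produces $(V_0 \to W_0)$ on which the map $(t,t^p)$ vanishes. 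The converse natural isomorphism $\Psi \circ \Phi \simeq \mathrm{Id}$ then follows from the splitting hypothesis near $Z$, which guarantees \'etale-locally that $\mM \cong \mathrm{Im}(\alpha) \oplus \mathrm{Ker}(\alpha)$, combined with the uniqueness of super-specializing $V$-filtrations from the previous subsection to identify the two constructions up to canonical isomorphism. Essential surjectivity is then built into $\Psi$, and full faithfulness follows because every morphism respects the unique $V$-filtration and hence restricts compatibly to the associated gluing data.
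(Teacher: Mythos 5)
Your forward functor is the same one the paper asserts (the paper's one-line proof writes the second component as $\Psi_Z^{un}(\mM)$, but as you correctly inferred it must be the vanishing-cycles quiver $Gr^{-1}\mM \to Gr^{-p}\mM$, since $(t,t^p)$ is the map from that object into $\Psi_Z^{un}$). The paper offers nothing beyond the assertion of full faithfulness and essential surjectivity, so the substance of your proposal is the quasi-inverse, and that construction has a genuine gap at the gluing step. The glued object must restrict to $\mN$ on $U$, but your near-$Z$ model $\mM_{\mathrm{coh}} \oplus \mT$ restricts on the punctured neighborhood to the local system attached to $\mathrm{Im}(a)$ only: $\mM_{\mathrm{coh}}$ was produced by feeding $\mathrm{Im}(a) \subset \Psi_Z^{un}(j_*\mN)$ into \ref{tamethm}, so there is no isomorphism $\mN|_{U\cap W} \cong \mM_{\mathrm{coh}}|_{U\cap W}$ with which to glue unless $a$ is surjective onto the (Kummer) part of $\Psi_Z^{un}(j_*\mN)$, and even then the identification is non-canonical, which breaks functoriality of $\Psi$. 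Concretely, the object $(\mO_U,\, 0 \to 0,\, 0)$ of $\mG$ must correspond to $\mO_X$ (the coherent extension with $Gr^{-1}=0$), but your recipe yields $\mM_{\mathrm{coh}} = \mT = 0$ and then asks to glue $\mO_U$ on $U$ with $0$ near $Z$, which is impossible. The missing idea is that the coherent extension of $\mN$ across $Z$ is not reconstructed abstractly from the quiver via \ref{tamethm}; it has to be cut out \emph{inside} $j_*\mN$, using $a$ (equivalently, which part of $Gr^{-1}(j_*\mN)$ is hit) to decide between, e.g., $\mO_X$ and $j_*\mO_U$ — this is where the actual gluing content and the splitness hypothesis enter.

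There is a second, smaller problem with the torsion summand: a quotient of $(V_0\otimes_{\boldk}\mO_X)\oplus(W_0\otimes_{\boldk}\mO_X)$ killed by powers of $t$ has finite length at $Z$ and can never be a unit $F$-module (Frobenius pullback multiplies its length by $p$), so $\mT$ cannot be built that way; the $Z$-supported part of a tame split module is a sum of delta modules, i.e.\ $\Delta_Z\otimes_{\boldk}(\text{unit }F\text{-crystal on }Z)$, whose vanishing-cycles quiver automatically satisfies the unit condition $\boldk\otimes_{\boldk^{(1)}}V_0^{(1)}\cong W_0$. This also shows that an arbitrary $p$-linear map $V_0\to W_0$ in the kernel of $a$ need not be realizable, so essential surjectivity requires either imposing on $\mG$ the analogue of condition $4$ of \ref{catdf} or arguing why it holds; your proposal assumes realizability silently. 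A repaired argument should either verify full faithfulness and essential surjectivity of the forward functor directly, using the \'etale-local decomposition $\mM \cong \mathrm{Im}(\alpha)\oplus\mathrm{Ker}(\alpha)$ with $\mathrm{Ker}(\alpha)$ a sum of $\Delta_Z$'s, or construct the quasi-inverse as (submodule of $j_*\mN$ determined by $a$) $\oplus$ (delta modules attached to $\ker a$), rather than through \ref{tamethm} applied to $\mathrm{Im}(a)$.
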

\begin{proof}
 The functor $\mM \mapsto (\mM|_{U}, \Psi_Z^{un}(\mM), (t,t^p))$ is fully faithful and essentially surjective.
\end{proof}

\subsection{The $V$-filtration for some non-split extensions}

In this section $X$ is dimension one, $Z$ is a point, and $\Delta_Z = \mO_X|_U/\mO_X$ denotes the unit $F$-module of delta functions on $Z$

\begin{thm}\ \label{curvesthm} \begin{enumerate} 

\item The $\boldk$-vector space $Ext^1_{unit F}(j_*\mO_X, \Delta_Z)$ has a basis generated by classes $[\frac{1}{t^{n+1}}]$ for $n \in \mathbb{N}$. 
\item Each extension extension $\mM_c$ determined by a class $c(t) \in Ext^1_{unit F}$ admits a super-specializing $V$-filtration when $p \nmid v_t(c)+1$.  This filtration makes the sequence $$0 \rightarrow \Delta_Z \rightarrow \mM_{c} \rightarrow j_*\mO_U \rightarrow 0$$
exact after shifting the filtration on $j_*\mO_U$ by $\frac{-n}{p}$.
\item The extensions determined by the classes $[\frac{1}{t^{lp^k+1}}]$ 
($p \nmid l, k \geq 1$) admit a super-specializing filtration for which this becomes a filtered extension after shifting the filtration on $j_*\mO_U$.  It also admits a separate specializing filtrations of depth $0, p, ..., p^k$.  This can be generalized to classes $c(t)$ with $v_t(c)+1 = p^kl$.
\end{enumerate}
\end{thm}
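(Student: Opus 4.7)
The plan is to work in local coordinates on the curve $X = \text{Spec}\,\boldk[t]$, so that $j_*\mO_U = \boldk[t, t^{-1}]$ and $\Delta_Z = \boldk[t, t^{-1}]/\boldk[t]$, and proceed part by part.

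For Part 1, I first observe that every extension $0 \to \Delta_Z \to \mM \to j_*\mO_U \to 0$ splits as $\mO_X$-modules: writing $j_*\mO_U = \varinjlim_n t^{-n}\mO_X$ and computing $\text{Ext}^1_{\mO_X}(j_*\mO_U, \Delta_Z)$ as the $\varprojlim^1$ of the system $\Delta_Z \xleftarrow{t} \Delta_Z$, the group vanishes by Mittag-Leffler because $t$ acts surjectively on $\Delta_Z$. Fixing such a splitting, the unit $F$-module structure is recorded by a $p$-linear map $\eta \colon j_*\mO_U \to \Delta_Z$ measuring the failure of $F$-equivariance, taken modulo the coboundary $\eta \sim \eta + F\sigma - \sigma F$ for $\mO_X$-linear $\sigma$. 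For each $n \in \mathbb{N}$, the cocycle $\eta_n(x) := \overline{x^p/t^{n+1}}$ is a representative for the class $[1/t^{n+1}]$. Spanning will be proved by a descending induction on the maximal pole order appearing in the support of a general cocycle: any contribution at a $p$-th-power pole can be traded via a suitable $\sigma$ for a contribution at lower pole orders. Linear independence follows by tracking the leading pole term of $F\sigma - \sigma F$, which is too constrained to produce a nontrivial cancellation among the $\eta_n$.

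For Part 2, assume $v_t(c) = -(n+1)$ with $p \nmid n$. I construct $\mM_c$ as $j_*\mO_U \oplus \Delta_Z$ in $\mO_X$-modules, with twisted Frobenius $F(x, \bar y) = (x^p, \overline{y^p + cx^p})$ corresponding to the cocycle $\eta_c$. The candidate $V$-filtration is the shifted direct sum of the standard filtrations on the two summands, where the $j_*\mO_U$-summand's filtration is shifted by $-n/p$. Since $p \nmid n$, the nonzero associated graded pieces of the two summands sit at disjoint rational indices (the $\Delta_Z$-part at integers, the $j_*\mO_U$-part at $\mathbb{Z} + n/p$), so $Gr^r \mM_c$ decomposes and the axioms of \ref{vdef} together with the super-specializing conditions reduce to their verification on each summand, plus a cross-term compatibility $\eta_c(V^i j_*\mO_U) \subset V^{ip} \Delta_Z$ which is precisely the numerical identity arranged by the shift. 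Filtered exactness after the shift $-n/p$ on $j_*\mO_U$ is then immediate.

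Part 3 is the main technical obstacle: when $v_t(c) + 1 = lp^k$ is divisible by $p$, the shift used in Part 2 would place both summands at integer indices and their associated graded pieces would collide. I plan to address this in two steps. First, I construct a super-specializing filtration by using a shift chosen so that the Frobenius action on the associated graded still matches up across summands; the idea is to pass through a formal inverse of Frobenius iterated $k$ times, effectively reducing the valuation from $lp^k$ to $l$ on the graded level and thereby recovering a Part-2-like disjointness of rational indices (uniqueness of the resulting filtration is automatic from the general uniqueness theorem). Second, for each $j = 0, 1, \ldots, k$, I produce a separate specializing filtration of depth $p^j$ by using depth-$p^j$ analogues of the standard filtrations on the summands, where the coarser step size accommodates the collision at $p$-divisible indices. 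The heart of the argument, analogous to the weight-space manipulation in the proof of \ref{starterthm}, is to verify at each depth $p^j$ that the cross-term $\eta_c$ respects the filtration and induces the required isomorphisms on associated graded pieces (condition $4$ of \ref{vdef}), and that the resulting $k+1$ specializing filtrations are genuinely distinct.
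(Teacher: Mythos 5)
Your parts 1 and 2 broadly shadow the paper (part 1 by an explicit cocycle calculus where the paper simply quotes the $\mO_X[F]$-resolution $0 \to \mO_X[F] \to \mO_X[F] \to j_*\mO_U \to 0$ coming from the relation $1-t^{p-1}F$ on the generator $t^{-1}$; part 2 by the same shifted direct-sum filtration), but already there you overstate two things. In part 1, the $\eta_n$ are \emph{not} linearly independent: taking $\sigma$ to be multiplication by $t^{-r}$ followed by projection to $\Delta_Z$, the coboundary $F\sigma - \sigma F$ sends $x$ to $\overline{x^p(t^{-rp}-t^{-r})}$, so $[1/t^{rp}]$ and $[1/t^{r}]$ are cohomologous; your claim that leading-pole bookkeeping forbids cancellation is exactly contradicted by these $\sigma$ (the resolution route makes these relations visible as the identification $d \sim t^{p-1}d^p$ in the cokernel). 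In part 2, the assertion that condition $4$ of \ref{vdef} ``reduces to verification on each summand plus a cross-term inclusion'' is precisely where the content lies and where it does not reduce: for indices $r=i-n/p>0$ the cocycle contribution $\overline{t^{ip-n}}$ is zero in $\Delta_Z$ (since $ip>n$), so $F$ of the generator of $Gr^{r}$ has no $\Delta_Z$-component, while $Gr^{rp}$ sits at a nonnegative integer where the $\Delta_Z$-summand contributes nothing; you must confront this case explicitly rather than assert it away.

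The genuine gap is part 3, where your strategy cannot work as described. Your step 1 tries to restore ``Part-2-like disjointness of rational indices'' by a cleverer shift (``a formal inverse of Frobenius iterated $k$ times''), but disjointness is structurally impossible: for any specializing filtration, condition $4$ forces $\deg F(x)=p\deg(x)$, and for $i\gg 0$ the cocycle term in $F(t^i,0)$ vanishes, so $\deg(t^{ip},0)=p\deg(t^i,0)$; combined with (SS2)/(SS3), which give $\deg(t^{i+1},0)=\deg(t^i,0)+1$ away from degree $-1$, this pins $\deg(t^i,0)=i$ for large $i$. Hence the quotient part is forced onto integer indices in the stable range and no shift keeps it off the indices occupied by $\Delta_Z$; any filtration here must let the two summands genuinely merge in the associated graded. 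That merging is the paper's key device: it assigns degrees so that the classes of $(1,0)$ and $(0,t^{-l})$ \emph{coincide} in the associated graded (both mapping to $(0,t^{-lp})$ under $F$ and hit by $(1,-t^{-l})$), which is exactly the mechanism your disjointness plan excludes. Likewise your step 2 (``depth-$p^j$ analogues of the standard filtrations'') is not a construction; the paper instead obtains the additional specializing filtrations of depth $p^j$ by observing that the extension attached to $[1/t^{lp^k+1}]$ is the $k$-th Frobenius pullback of the one attached to $[1/t^{l+1}]$ and transporting the filtration through that pullback. Without the merging phenomenon and the Frobenius-pullback identification, your outline for part 3 does not produce the claimed filtrations.
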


\begin{rem} For each class $c$ with $p \nmid v_t(c)+1$ the extension $\mM_c$ is a type of maximal extension in the category of super-specializable modules.  The shift in the filtration appears to account for the Tate twist which appears for perverse sheaves in the $l \neq p$ situation.  See \cite{Be}.
\end{rem}

\begin{pro} The sheaf $\mM_{c}$ is the $\mO_X$-module $j_*\mO_U \oplus \Delta_Z$ equipped with the Frobenius $F(f,g)=(f^p, tf^pc(t) + g^p)$.  Equipped with the obvious inclusion and projection maps it is an extension of unit $F$-modules.
\end{pro}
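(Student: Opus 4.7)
My plan is to verify two things: first, that the formula $F(f,g)=(f^p, tf^p c(t)+g^p)$ defines a $p$-linear endomorphism of $j_*\mO_U\oplus \Delta_Z$ whose linearization $F^*\mM_c\to \mM_c$ is an isomorphism, giving $\mM_c$ the structure of a unit $F$-module; and second, that the obvious inclusion $\iota:\Delta_Z\hookrightarrow\mM_c$ sending $g\mapsto(0,g)$ and the projection $\pi:\mM_c\twoheadrightarrow j_*\mO_U$ sending $(f,g)\mapsto f$ are morphisms of unit $F$-modules. The second point is a two-line check from the formula, so the substantive content is the first. Before doing anything else, I would fix a specific Laurent-polynomial lift of $c(t)$ to $j_*\mO_U$ (for example $c(t)=1/t^{n+1}$ for the basis classes) so that $tf^pc(t)$ literally lies in $j_*\mO_U$ and thus has an unambiguous image in $\Delta_Z$. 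The expression $g^p$ for $g\in\Delta_Z$ is well defined because, given two lifts $h$ and $h+a$ of $g$ with $a\in\mO_X$, the freshman's dream in characteristic $p$ yields $(h+a)^p=h^p+a^p$, so the classes $[h^p]$ and $[(h+a)^p]$ coincide in $\Delta_Z$.

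To show $F$ linearizes to an isomorphism, I would invoke the five lemma. Setting $f=0$ in the formula gives $F(0,g)=(0,g^p)$, so $F$ restricts on $\Delta_Z$ to the standard Frobenius; taking the first coordinate of $F(f,g)=(f^p,\ldots)$ shows $F$ descends on $j_*\mO_U$ to the standard Frobenius $f\mapsto f^p$. Applying $F^*$ to the short exact sequence of $\mO_X$-modules $0\to\Delta_Z\to\mM_c\to j_*\mO_U\to 0$ preserves exactness because absolute Frobenius is flat on the smooth $\boldk$-variety $X$ (Kunz's theorem). In the resulting commutative diagram, the outer vertical arrows are the linearizations of the standard Frobenii on $\Delta_Z$ and $j_*\mO_U$, which are isomorphisms by hypothesis (both are unit $F$-crystals/modules), so the five lemma forces the middle vertical $F^*\mM_c\to\mM_c$ to be an isomorphism.

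Finally, the compatibility $F\circ\iota=\iota\circ F_{\Delta_Z}$ and $\pi\circ F=F_{j_*\mO_U}\circ\pi$ is immediate from the formula, so the $\mO_X$-linear short exact sequence lifts to a short exact sequence in the category of unit $F$-modules, identifying $\mM_c$ as the claimed extension of $j_*\mO_U$ by $\Delta_Z$. The main technical point is bookkeeping: fixing the lift of $c(t)$, verifying that $g^p$ on $\Delta_Z$ is well defined, and invoking flatness of Frobenius to get exactness of the pulled-back sequence; once that is in hand, the argument is formal.
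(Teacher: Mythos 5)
Your verification of well-posedness is correct and carefully done: fixing a Laurent-polynomial lift of $c(t)$, checking that $g\mapsto g^p$ is well defined on $\Delta_Z$ via the freshman's dream, and deducing that the linearization $F^*\mM_c\to\mM_c$ is an isomorphism from the two outer unit structures is fine. (The appeal to Kunz flatness is more than you need: since $\mM_c=j_*\mO_U\oplus\Delta_Z$ as an $\mO_X$-module, the sequence is split, so $F^*$ preserves its exactness automatically; but the five-lemma step itself is valid.) Likewise the compatibility of the inclusion and projection with the Frobenii is immediate from the formula, as you say.

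However, there is a gap in what you have actually proved. In the paper $\mM_c$ is defined (in Theorem \ref{curvesthm}) as \emph{the extension determined by the class} $c(t)\in Ext^1_{unit F}(j_*\mO_U,\Delta_Z)$, so the content of the proposition is not merely that the displayed module is \emph{some} extension of unit $F$-modules of $j_*\mO_U$ by $\Delta_Z$ --- which is all your argument shows --- but that its extension class is precisely $[c(t)]$ under the identification coming from part (1) of that theorem. To close this you must connect the explicit Frobenius to the free resolution used there: writing $j_*\mO_U\cong \mO_X[F]/\mO_X[F](1-t^{p-1}F)$ with generator $t^{-1}$, lift the generator to $(t^{-1},0)$ in $j_*\mO_U\oplus\Delta_Z$ and compute
\begin{equation*}
(1-t^{p-1}F)(t^{-1},0)=(t^{-1},0)-t^{p-1}\bigl(t^{-p},\,t^{1-p}c(t)\bigr)=(0,-c(t)),
\end{equation*}
which exhibits the Yoneda class of your explicit module as $[c(t)]$ (up to sign) in $Ext^1_{unit F}(j_*\mO_U,\Delta_Z)$, computed from that resolution as a quotient of $\Delta_Z$. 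Without some such computation, the proposal establishes only that the formula defines a unit $F$-module extension, not that it is $\mM_c$; with it, your argument becomes a complete proof (the paper itself omits the verification, treating it as exactly this computation).
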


\begin{proof}(Proof of \ref{curvesthm})
\begin{enumerate}
\item Follows directly from the free $\mO_X[F]$ resolutions of $j_*\mO_U$ given by the root morphism in \cite{Lyu} and the associated complex from \cite{EK} (5.3.3).  Equivantly, it is the free resolution given by $\mO_X[F]$ acting on $t^{-1}$ with kernel $1-t^{p-1}F$.\\

\item Set $n=v_t(c(t))+1$ and using the description of the previous proposition, define a filtration by $(f, \overline{g}) \in V^{j}$ if only if $min\{v_t(f)-n/p, v_t(\overline{g})\} \geq j$ for all $j \in \mathbb{Q}$ and $j \leq 0$.  For $j \geq 0$ one requires $g=0$ and $v_t(f) - n/p \geq j$.  Axioms $1$ and $2$ of a specializing filtration are clear.  Axiom $3$ follows through from a case by case analysis.  Moreover, the non-zero components of the associated graded module are $Gr^{r}\mM_{c}$ with $r = i - n/p$ or $r=i$ for any integer $i$.  In the non-integer $r=i-n/p$ case it is a single vector space generated by $(t^{i},0)$ and in the integer case $r=-i \leq 1$ it is generated by $(0,\frac{1}{t^i})$.  It is easy to confirm axiom $4$ and the axioms of super-specialization.\\

\item Only a proof is sketched and for the case $k=1$.  Give the extension the natural grading determined by setting $(t^i,-\frac{t^i}{t^l})$ to have degree $-\frac{l}{p}$ and $(0,t^i)$ to have degree $i$.  The key observation is that while both $(1,0)$ and $(0,\frac{1}{t^l})$ in the associated graded map to $(0, \frac{1}{t^{lp}})$ under $F$, they are equal in the associated graded and are mapped to by $(1,-\frac{1}{t^l})$ under $F$.  The second statement is obvious after realization that the extension determined by $[\frac{1}{t^{lp^k+1}}]$ is the $k$-th Frobenius pullback of the extension determined by $[\frac{1}{t^{l+1}}]$  \\
\end{enumerate}
\end{proof}

\bibliographystyle{alpha}
\bibliography{unitF}

\end{document}